\newtheorem{thm}{Theorem}[section]
\newtheorem{lem}[thm]{Lemma}
\newtheorem{cor}[thm]{Corollary}
\theoremstyle{definition}
\newtheorem{defi}[thm]{Definition}
\newtheorem{rem}[thm]{Remark}
\newcommand{\de}{\, \mathrm{d}}
\newcommand{\Vol}{\operatorname{Vol}}
\newcommand{\N}{\mathbb{N}}
\newcommand{\Z}{\mathbb{Z}}
\newcommand{\R}{\mathbb{R}}
\newcommand{\T}{\mathbb{T}}
\newcommand{\CE}{\mathcal{E}}
\newcommand{\CF}{\mathcal{F}}
\newcommand{\CG}{\mathcal{G}}
\newcommand{\CK}{\mathcal{K}}
\newcommand{\CL}{\mathcal{L}}
\newcommand{\CM}{\mathcal{M}}
\newcommand{\CT}{\mathcal{T}}
\newcommand {\bx}{\mathbf x}
\newcommand {\bh}{\mathbf h}
\newcommand {\by}{\mathbf y}
\newcommand {\bn}{\mathbf n}
\newcommand {\bxi}{\boldsymbol \xi}
\newcommand{\beps}{\boldsymbol{\varepsilon}}
\newcommand{\abs}[1]{\left\lvert #1 \right\rvert}
\newcommand{\set}[1]{\left\{ #1 \right\}}
\newcommand{\bo}\boldsymbol{}
\newcommand{\bigo}[2][]{O_{#1}\left( #2 \right)}
\newcommand{\smallo}[2][]{o_{#1}\left( #2 \right)}
\newcommand{\bone}{\bo{1}}
\DeclareMathOperator{\spn}{span}
\DeclareMathOperator{\diag}{diag}
\DeclareMathOperator{\inj}{inj}
\newcommand{\ft}[2][]{\left[\CF_{#1} #2\right]}
\renewcommand{\tilde}{\widetilde}
\newcommand{\eps}{\varepsilon}
\renewcommand{\phi}{\varphi}
\newcommand{\GL}{\textrm{GL}}
\begin{document}

\title[Optimal tori and anisotropic expansion]{Eigenvalue optimisation on flat tori and lattice points in anisotropically expanding domains}
\author{Jean Lagac\'e}
\begin{addresses}
\address{Department of Mathematics \\ University College London \\ Gower Street, London \\ WC1E 6BT \\ United Kingdom}
\email{j.lagace@ucl.ac.uk}
\end{addresses}

 \begin{abstract}
 This paper is concerned with the maximisation of the $k$th eigenvalue of the Laplacian amongst flat tori of unit volume in dimension $d$ as $k$ goes to infinity. We show that in any dimension maximisers exist for any given $k$, but that any sequence of maximisers degenerates as $k$ goes to infinity when the dimension is at most $10$. Furthermore, we obtain specific upper and lower bounds for the injectivity radius of any sequence of maximisers. We also prove that flat Klein bottles maximising the $k$th eigenvalue of the Laplacian exhibit the same behaviour. These results contrast with those obtained recently by Gittins and Larson, stating that sequences of optimal cuboids for either Dirichlet or Neumann boundary conditions converge to the cube no matter the dimension. We obtain these results via Weyl asymptotics with explicit control of the remainder in terms of the injectivity radius. We reduce the problem at hand to counting lattice points inside anisotropically expanding domains, where we generalise methods of Yu. Kordyukov and A. Yakovlev by considering domains that expand at different rates in various directions.
\end{abstract}
\maketitle
\section{Introduction and main results}

Let $(M,g)$ be a smooth closed Riemannian manifold of dimension $d$. We study the Laplace eigenvalue problem
\begin{equation*}
 \Delta u + \lambda u = 0.
\end{equation*}
The eigenvalues of the Laplacian form a discrete, nondecreasing sequence, repeating every eigenvalue according to multiplicity, 
\begin{equation*}
0 = \lambda_0(M,g) \le \lambda_1(M,g) \le \dotso \nearrow \infty 
\end{equation*}
accumulating only at infinity. 

\subsection{Asymptotic eigenvalue optimisation}

In this paper, we study the maximisation problem
\begin{equation} \label{eq:statedproblem}
 \Lambda_k^\star(\CG) := \sup_{g \in \CG} \Lambda_k(M,g) := \sup_{g \in \CG}\Vol_g(M)^{2/d} \lambda_k(M,g),
\end{equation}
where $\CG$ is a class of metrics on $M$. This problem has been studied extensively for $k = 1$ in many settings: closed manifolds, manifolds with Neumann boundary conditions, and manifolds with Dirichlet boundary conditions in which case one minimises $ \Lambda_k$. Note that for closed manifolds it only makes sense to maximise $ \Lambda_k$. Indeed, for any $k$ one can find a sequence of metrics $g_n$ of unit volume such that $ \Lambda_k(M,g_n) \to 0$ as $n \to \infty$ by considering a sequence of metrics that degenerate to a disjoint union of $k+1$ closed manifolds touching at a point.

An interesting feature is that the extremisers for low eigenvalues are in general very symmetric. Indeed, the Faber-Krahn inequality \cite{faber,krahn1,krahn2} and the Szeg\"o-Weinberger inequality \cite{szego,weinberger} imply that the ball is the extremiser for $ \Lambda_1$ with Dirichlet or Neumann boundary conditions in any dimension. In the case of closed surfaces, Hersch has shown \cite{hersch} that the round sphere is the maximiser for $ \Lambda_1$ amongst two-dimensional spheres, and Nadirashvili has shown \cite{nad} that the equilateral flat torus is the maximiser for $\Lambda_1$ amongst surfaces of genus one.

For higher eigenvalues on domains, one does not expect those symmetries to appear. Indeed, A. Berger has shown \cite{aberger} that disks or union of disks can minimise $\Lambda_k$ on domains in the plane with Dirichlet boundary conditions only finitely many times. Furthermore, numerical experiments of Antunes and Freitas \cite{antunesfreitasnumerical} suggest that optimal domains in $\R^2$ may not exhibit many symmetries for $k \ge 5$. However, the same authors investigated in \cite{antunesfreitas} the behaviour of optimal domains as $k$ goes to infinity. More specifically, they showed that amongst rectangles with Dirichlet boundary condition, the sequence of rectangles minimising $\Lambda_k$ converges to the square in the Hausdorff metric. This has led to a series of papers \cite{vBBG,vBG,GL} culminating in a proof by Gittins and Larson, who show that in any dimension and with either Neumann or Dirichlet boundary conditions the sequence of optimal cuboids converges to the cube. 

Without any restriction on the metric, one does not even have a maximiser amongst closed manifolds. Indeed, Colbois and Dodziuk have shown in \cite{cd} that amongst all metrics of fixed volume on a manifold, one can make $\lambda_1$ as large as possible.
For metrics on closed surfaces, one does not necessarily expect the sequence of maximising metrics to converge to a smooth metric. For instance, Karpukhin, Nadirashvili, Penskoi and Polterovich \cite{KNPP} obtained in a recent preprint that the maximising metric on the two-dimensional sphere for the $k$th Laplace eigenvalue degenerates to a union of $k$ kissing round spheres. 

We study the maximisation problem \eqref{eq:statedproblem} for metrics on two classes of closed manifold. The first one is the class $\CM$ of flat metrics on tori in dimension $d$. Let $\CL = \GL_d(\R)/\GL_d(\Z)$ be the set of lattices in $\R^d$ equipped with the quotient topology. We identify $\CM$ with $\CL$ since
\begin{equation*}
 \CM = \set{\T_{\Gamma} = \R^d / \Gamma : \Gamma \in \CL}. 
\end{equation*}
As such, convergence in $\CM$ will be identified with convergence in $\CL$. We study the properties of maximisers to \eqref{eq:statedproblem} in $\CL_0$ the subset of all lattices with unit determinant, which corresponds to subset $\CM_0$ of flat tori with unit volume. 

The second class that we study is the set $\CE$ of flat metrics on Klein bottles. Flat Klein bottles are quotients of two-dimensional flat rectangular tori and as such are described by the two-parameters family
\begin{equation*}
 \CE := \set{K(a,b) := \left(\R^d/(a\Z \oplus b\Z)\right)/\sim : (a,b) \in \R^2_+},
\end{equation*}
where $\sim$ is the relation $(x,y) \sim \left(x + \frac a 2,b-y\right)$. Once again, we study the properties of maximisers of \eqref{eq:statedproblem} in the class $\CE_0$ of Klein bottles with unit volume, \emph{i.e.} the family $K(a,b)$ where $ab = 2$.

Before discussing asymptotic properties of maximisers to the problem \eqref{eq:statedproblem}, we start by proving that such maximisers do exist.
\begin{thm}\label{thm:existtorus}
 For all $k \in \N$, there exist $\T_{k}^\star \in \CM_0$ and $K_k^\star \in \CE_0$ maximising the variational problems
 \begin{equation*}
  \Lambda_k^\star(\CM) = \sup_{\T_{\Gamma} \in \CM}  \Lambda_k(\T_\Gamma).
 \end{equation*}
 and
 \begin{equation*}
   \Lambda_k^\star(\CE) = \sup_{K \in \CE}  \Lambda_k(K).
 \end{equation*}
\end{thm}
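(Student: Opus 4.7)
The plan is to combine continuity of $\Lambda_k$ on the moduli space with quantitative decay at the cusps, and then invoke Mahler's compactness criterion to reduce the problem to optimising a continuous function on a compact set.

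Continuity comes from the explicit description of the spectrum: the eigenvalues of $\T_\Gamma$ are exactly $4\pi^2|v|^2$ for $v\in\Gamma^*$, listed in non-decreasing order. Since the map $\Gamma\mapsto\Gamma^*$ is continuous on $\CL$ and the ordered squared lengths of lattice vectors depend continuously on the defining quadratic form, the map $\Gamma\mapsto \lambda_k(\T_\Gamma)$ is continuous on $\CL$, hence $\Lambda_k$ is continuous on $\CM_0$. For Klein bottles, the spectrum of $K(a,b)$ is the invariant subspectrum of the covering rectangular torus $\R^2/(a\Z\oplus b\Z)$ under the gliding involution $(x,y)\mapsto(x+a/2,b-y)$, and depends continuously on $(a,b)\in\R^2_+$.

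The crux is showing that $\Lambda_k$ tends to zero as the lattice degenerates. By Mahler's compactness criterion, any non-precompact sequence $\set{\Gamma_n}\subset \CL_0$ contains a subsequence with $\sys(\Gamma_n)\to 0$, so it suffices to prove $\Lambda_k(\T_{\Gamma_n})\to 0$ in this situation. If $v_n\in\Gamma_n$ is a primitive systolic vector of length $\epsilon_n$, then $\T_{\Gamma_n}$ projects as a Riemannian submersion onto a flat $(d-1)$-torus $B_n$ of volume $\epsilon_n^{-1}$ with fibre $S^1$ of length $\epsilon_n$. Pulling back the first $k$ eigenfunctions of $B_n$ produces $k$ admissible test functions with identical Rayleigh quotients, so that
\begin{equation*}
\lambda_k(\T_{\Gamma_n})\le \lambda_k(B_n)\le C_d\, k^{2/(d-1)}\,\epsilon_n^{2/(d-1)}
\end{equation*}
by Weyl's law (or directly by Minkowski's theorem applied to the dual of $B_n$), and the right-hand side tends to zero. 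An analogous fibration argument applied to $K(a_n,b_n)$, or to its covering rectangular torus, yields $\Lambda_k(K(a_n,b_n))\to 0$ whenever $a_n\to 0$ or $a_n\to\infty$ along $a_nb_n=2$.

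The conclusion is now formal. Evaluating $\Lambda_k$ at the standard cubic torus $\T_{\Z^d}$ provides a strictly positive lower bound $m$ for $\Lambda_k^\star(\CM)$; the decay estimate then supplies a threshold $\delta>0$ such that $\Lambda_k<m/2$ on $\set{\Gamma\in\CL_0:\sys(\Gamma)<\delta}$, so the supremum is attained on the complement, which is compact by Mahler's criterion. Continuity then delivers a maximiser. The argument for Klein bottles is identical after replacing Mahler's set with a compact interval in $a$. The only non-formal step is the decay estimate; the fibration argument sketched above seems cleanest at this qualitative level, whereas the refined Weyl-type remainder estimates developed later in the paper would also imply it, though with considerably more machinery than needed for mere existence.
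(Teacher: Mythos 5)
Your proof is correct, and its skeleton (continuity on the moduli space, decay of $\Lambda_k$ along any degenerating sequence, Mahler compactness on the sublevel-complementary set) coincides with the paper's. The mechanism you use for the decay step, however, differs from the paper's. The paper first passes to the dual lattice, reducing the maximisation of $\Lambda_k(\T_\Gamma)$ to maximising $\tilde\Lambda_k(\Gamma^*) = \abs{\Gamma^*}^{-1/d}\abs{\gamma^*_k}$, and then the decay estimate is essentially one line: if $\mu_1(\Gamma^*_n)\to 0$, then $\abs{\gamma^*_k}\le k\,\mu_1(\Gamma^*_n)\to 0$, which eventually drops below the value achieved by $\Z^d$. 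You instead stay in the primal picture and argue geometrically: a short systolic vector gives a Riemannian submersion onto a $(d-1)$-torus $B_n$ of volume $\eps_n^{-1}$, spectral inclusion $\sigma(B_n)\subset\sigma(\T_{\Gamma_n})$ gives $\lambda_k(\T_{\Gamma_n})\le\lambda_k(B_n)$, and Minkowski applied to the dual of $B_n$ (not Weyl's law, which is an asymptotic in $k$ at fixed manifold, so your parenthetical fallback is the right tool; it gives $k^2\eps_n^{2/(d-1)}$ rather than $k^{2/(d-1)}\eps_n^{2/(d-1)}$, which is harmless here) forces $\lambda_k(B_n)\to 0$. Unwinding the duality, your fibration step is exactly the observation that $\Gamma^*_n\cap V_n^\perp$ is a $(d-1)$-dimensional sublattice of $\Gamma^*_n$ of covolume $\eps_n$, so Minkowski there bounds $\mu_1(\Gamma^*_n)$; the paper sidesteps this by having already dualised. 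Your route is more geometric and transparent but slightly longer; the paper's is more elementary once the dual reformulation is set up. Both are valid, and both handle the Klein bottle case the same way in spirit (explicit spectrum, degeneration as $a\to 0$ or $a\to\infty$ on $ab=2$).
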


The behaviour of maximisers for tori and Klein bottles contrasts both with the results obtained for cuboids where the optimal cuboid converges to the cube and with the degeneracy results of \cite{cd} and \cite{KNPP}. Indeed, we show that for tori of dimension $2 \le d \le 10$, the sequence of optimisers has no limit points in $\CM_0$. However, we also show that this degeneracy can happen without changing the curvature as was done in \cite{KNPP}, or in \cite{cd}. 

Furthermore, we obtain a rate of degeneracy in terms of the injectivity radius. This is similar to the results in \cite{GL} where the rate of convergence to the cube is given. The range $2 \le d \le 10$ are the dimensions for which the volume of the unit ball $\omega_d$ is larger than $\omega_1 = 2$. In higher dimensions, the same type of result may hold, but the degeneracy certainly doesn't happen in the same way.
\begin{thm} \label{thm:optimtorus}
 In dimension $2 \le d \le 10$, there are no accumulation points in $\CM_0$ of any sequence $\set{\T_k^\star}$. The injectivity radius of $\T_k^\star$ respects
 \begin{equation} \label{eq:injradius}
 k^{-\frac{(1 - d)^2}{d}} \ll \operatorname{inj}(\T_k^\star) \ll k^{-\frac{1}{d}}.
 \end{equation}
 The lower bound is valid for all dimensions $d \in \N$. 
\end{thm}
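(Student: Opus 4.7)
The strategy is to prove the upper and lower bounds separately, with a common ingredient being explicit control of the eigenvalue counting function for rectangular flat tori in the strongly anisotropic regime.

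\textbf{Upper bound.} The plan is to compare the maximiser to a concrete family of rectangular test tori. Consider the torus $\T_L \in \CM_0$ whose lattice $\Gamma_L$ is generated by $Le_1, \dots, Le_{d-1}, L^{-(d-1)}e_d$; for $L \le 1$, $\inj(\T_L) = L/2$. Since the dual lattice is $L^{-1}\Z^{d-1} \oplus L^{d-1}\Z$, the eigenvalues of $\T_L$ are $4\pi^2\bigl(L^{-2}|n'|^2 + L^{2(d-1)}n_d^2\bigr)$ with $n' \in \Z^{d-1}$, $n_d \in \Z$. For $k \le 2\lfloor L^{-d}\rfloor$ only the $n'=0$ modes contribute below $\lambda_k(\T_L)$, giving $\lambda_k(\T_L) = 4\pi^2 \lceil k/2\rceil^2 L^{2(d-1)}$. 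Setting $L(k) = (2/k)^{1/d}$ saturates this regime and yields
\begin{equation*}
\lambda_k(\T_{L(k)}) = \frac{(2\pi)^2}{2^{2/d}}\, k^{2/d}(1 + o(1)) = \left(\frac{\omega_d}{2}\right)^{2/d} \cdot \frac{(2\pi)^2 k^{2/d}}{\omega_d^{2/d}}(1 + o(1)).
\end{equation*}
Since $\omega_d > \omega_1 = 2$ precisely for $2 \le d \le 10$, this strictly exceeds the Weyl leading constant. The uniform Weyl asymptotic with injectivity-controlled remainder (produced by the anisotropic Kordyukov--Yakovlev machinery developed in later sections) then implies that any $\T$ with $\inj(\T) \ge C k^{-1/d}$ must satisfy $\lambda_k(\T) < \lambda_k(\T_{L(k)})$ for $k$ large, forcing $\inj(\T_k^\star) \ll k^{-1/d}$, which in turn rules out accumulation points of $\{\T_k^\star\}$ in $\CM_0$ by Mahler's compactness criterion.

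\textbf{Lower bound.} Let $\Gamma \in \CL_0$ have $\sys(\Gamma) = 2r$, i.e.\ first successive minimum $\lambda_1(\Gamma) = 2r$. Minkowski's second theorem yields $\prod_{i=1}^d \lambda_i(\Gamma) \asymp 1$, so $\lambda_d(\Gamma) \gg r^{-1/(d-1)}$. Mahler's transference theorem then provides $\lambda_1(\Gamma^*) \ll 1/\lambda_d(\Gamma) \ll r^{1/(d-1)}$. Picking $w \in \Gamma^*$ realising $|w| = \lambda_1(\Gamma^*)$, the $k+1$ integer multiples $\{jw : |j| \le \lceil k/2\rceil\} \subset \Gamma^*$ are distinct and produce $k+1$ eigenvalues of $\T_\Gamma$ bounded by
\begin{equation*}
\lambda_k(\T_\Gamma) \le 4\pi^2 \lceil k/2\rceil^2 |w|^2 \le C k^2 r^{2/(d-1)}.
\end{equation*}
Applied to $\T_\Gamma = \T_k^\star$ and combined with the trivial lower bound $\Lambda_k^\star \gg k^{2/d}$ (coming from Weyl's law on $\T_{\Z^d}$), one obtains $r \gg k^{-(d-1)^2/d}$ in every dimension $d \ge 2$; notice that this direction of the argument uses only lattice transference and is insensitive to the value of $\omega_d$.

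\textbf{Main obstacle.} The principal technical hurdle is the uniform Weyl asymptotic with remainder remaining $o(k^{2/d})$ whenever $\inj(\T) \gtrsim k^{-1/d}$: this is precisely where the anisotropic generalisation of Kordyukov--Yakovlev is indispensable, because at the critical injectivity the relevant dual lattice is itself becoming anisotropic and the classical $O(\lambda^{(d-1)/2})$ Weyl remainder is too weak to separate $\omega_d^{-2/d}$ from $2^{-2/d}$. Once that uniform remainder is in hand, the explicit computation on $\T_{L(k)}$ and the transference-based upper bound on $\lambda_k(\T_\Gamma)$ are essentially elementary.
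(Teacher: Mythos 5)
Your proposal is correct and follows essentially the same strategy as the paper: compare the candidate maximiser against a degenerate rectangular test torus (your $\T_{L(k)}$ is, up to a constant, dual to the paper's test lattice $\Theta_{2k}$), invoke the uniform anisotropic Weyl remainder to beat it whenever the injectivity radius is too large, and combine Minkowski's second theorem with Banaszczyk's transference and the "multiples of a short dual vector" trick for the lower bound on the injectivity radius. The only cosmetic difference is that you work directly with the torus and its primal lattice, whereas the paper first abstracts everything to the lattice functional $\tilde\Lambda_k$ on the dual side (Theorem \ref{thm:optim}) and then translates back; the key lemmas and exponents are identical.
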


\begin{rem}
 In dimension $2$, the lower bound and the upper bound are, at least to polynomial order, the same. The discrepancy between the upper and lower bounds are due to the fact that we find lower bounds on both the first and last successive minima of the associated dual lattice $\Gamma^*$, defined in equation \eqref{eq:succmin}. The lower bound on the last successive minima of $\Gamma^*$ gives directly an upper bound on the first successive minima of $\Gamma$ via Banaszczyk's transference theorem, and this quantity corresponds to the injectivity radius of $\T_\Gamma$. The lower bound on the first successive minima of $\Gamma^*$, does not give a lower bound on the injectivity radius so directly, and there is a loss in the strength of the estimation.
\end{rem}

In \cite{KLO}, Kao, Lai and Osting conjectured that in dimension $2$, the optimal flat torus was given by $\T_{2k}^\star = \R^2 / \Gamma_{2k}$, where $\Gamma_{2k}$ is the lattice spanned over $\Z$ by the vectors
 \begin{equation} \label{eq:optimalKLO}
  \gamma_1^{(2k)} = \left(k^2 - 1/4\right)^{-1/4}\left(1,0\right) \qquad \gamma_2^{(2k)} = \left(k^2 - 1/4\right)^{-1/4}\left(1/2,\sqrt{k^2 - 1/4} \right).
 \end{equation}
In dimension $2$, flat tori of unit volume form a two-dimensional moduli space with parameters $a,b$, with $a \in (-1/2,1/2]$, $b > 0$ such that $a^2 + b^2 \ge 1$. The associated lattices are spanned by
\[
 \gamma_1(a,b) = b^{-1/2}(1,0) \qquad \gamma_2(a,b) = b^{-1/2}(a,b).
\]
It is shown in \cite{KLO} that the flat torus in equation \eqref{eq:optimalKLO} is indeed maximal for $\Lambda_{2k}$ amongst tori for which $a^2 + b^2 \ge (k -1)^2$. The  upper bound on the injectivity radius in Theorem \ref{thm:optimtorus} yields that there exists a constant $c > 0$ such that the same torus has a higher $\Lambda_{2k}$ than every flat tori such that $a^2 + b^2 \le c k^2$.

Our methods also allow us to study sequences of optimisers in the moduli space $\CE$ of flat Klein bottles. Indeed, we also have degeneracy in this case, and we can also describe the rate of degeneracy.

\begin{thm} \label{thm:klein}
 There are no accumulation points in $\CE_0$ of any sequence $\set{K_k^\star}$. The injectivity radius of $K_k^\star$ respects
 \begin{equation} \label{eq:injradiusklein}
 k^{-\frac 1 2} \ll \operatorname{inj}(K_k^\star) \ll k^{-\frac 1 2}.
 \end{equation}
\end{thm}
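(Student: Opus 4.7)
The plan is to mirror the proof of Theorem~\ref{thm:optimtorus} in dimension two, adapting the argument to account for the $\sigma$-quotient structure of the Klein bottle. First, I would write down the spectrum of $K(a,b)$ explicitly: realising it as the quotient of the flat torus $\R^2/(a\Z\oplus b\Z)$ by the glide reflection $\sigma(x,y)=(x+a/2,b-y)$, and decomposing the torus eigenfunctions $e^{2\pi i(mx/a+ny/b)}$ into $\sigma$-invariant pieces, one finds that the spectrum of $K(a,b)$ consists of eigenvalues $4\pi^2(m^2/a^2+n^2/b^2)$ indexed by $(m,n)\in\Z^2$ subject to the parity condition that $n=0$ forces $m$ even, with multiplicities inherited from the identification $(m,n)\sim(-m,-n)$ twisted by $(-1)^m$. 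The counting function $N(\lambda;K(a,b))$ then splits as a two-dimensional lattice point count for $\Gamma^*=a^{-1}\Z\oplus b^{-1}\Z$ inside the disk of radius $\sqrt{\lambda}/(2\pi)$, corrected by boundary terms supported on the coordinate axes.

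Next I would apply the anisotropic Weyl asymptotic estimates developed earlier in the paper to each piece of this decomposition, obtaining a Weyl law for $K(a,2/a)$ with remainder controlled in terms of the injectivity radius $\operatorname{inj}(K(a,2/a))=\min(a/4,1/a)$. This step is where the main technical input of the paper is required, and it is the main obstacle — the eigenvalue comparisons below are routine once this estimate is in hand.

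The upper bound $\operatorname{inj}(K_k^\star)\ll k^{-1/2}$ then follows by comparison with an explicit test bottle. Choosing $a_k=\sqrt{2/k}$, so that $\operatorname{inj}(K(a_k,2/a_k))\asymp k^{-1/2}$, a direct inspection shows that the first $k+1$ eigenvalues of $K(a_k,2/a_k)$ come entirely from the $m=0$ branch and equal $2\pi^2 n^2/k$ for $n=0,1,\ldots,k$; in particular $\lambda_k(K(a_k,2/a_k))=2\pi^2 k$. Since $2\pi^2>4\pi$, the Weyl law with remainder forces any Klein bottle whose injectivity radius is bounded below by a sufficiently large multiple of $k^{-1/2}$ to satisfy $\lambda_k<2\pi^2 k$ for $k$ large, and the upper bound follows. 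For the matching lower bound $\operatorname{inj}(K_k^\star)\gg k^{-1/2}$, I would argue by direct eigenvalue counting: if $\operatorname{inj}(K(a,2/a))=\varepsilon k^{-1/2}$ with $\varepsilon$ small, then after a symmetry exchanging the two branches we may assume $a\ll k^{-1/2}$, and the $m=0$ branch alone contributes more than $k$ eigenvalues below $C\varepsilon^2 k$ for an absolute constant $C$, forcing $\lambda_k\ll k$ and ruling out optimality. Absence of accumulation points in $\CE_0$ is then immediate: any such accumulation point would have positive injectivity radius, contradicting $\operatorname{inj}(K_k^\star)\to 0$.
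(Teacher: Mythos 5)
Your proposal is correct and follows essentially the same route as the paper: reduce $N(\lambda;K(a,b))$ to a lattice-point count for the rectangular lattice $\frac{2\pi}{a}\Z\oplus\frac{2\pi}{b}\Z$ (with an $O(1)$ correction from the excluded $n=0$, odd-$m$ points), then repeat the two-dimensional torus argument, with your test bottle $K(\sqrt{2/k},\sqrt{2k})$ playing the role of $\Theta_{2k}$ and the constant comparison $2\pi^2>4\pi$ being exactly $\omega_2>\omega_1=2$. One small imprecision: the Klein bottle spectrum is not symmetric under $a\leftrightarrow b$ (the $n=0$ branch excludes odd $m$ while the $m=0$ branch does not), so the phrase ``after a symmetry exchanging the two branches'' should be replaced by checking the cases $a\ll k^{-1/2}$ and $b\ll k^{-1/2}$ separately; each gives the same $k^{-1/2}$ bound up to constants.
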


\subsection{Explicit exponent for the remainder in Weyl's law}

In the papers \cite{antunesfreitas,vBBG,vBG,GL} on optimal cuboids a prominent feature consisted in finding uniform bounds on the eigenvalue counting function
\begin{equation*}
N(\lambda;M) = \#\set{\lambda_k(M) < \lambda}.
\end{equation*}
 Weyl's law states that for any fixed $(M,g)$ the counting function $N(\lambda;M)$ enjoys the asymptotics
\begin{equation}\label{eq:weylremainder}
 N(\lambda;M) = \frac{\omega_d}{(2\pi)^d} \lambda^{\frac d 2} + R(\lambda;M),
\end{equation}
where $R(\lambda;M) = \smallo{\lambda^{\frac{d}{2}}}$ and $\omega_d$ is the volume of a unit ball in dimension $d$. Under the hypothesis that periodic geodesics have measure $0$ in the cosphere bundle of $M$, Duistermaat and Guillemin \cite{DuistermaatGuillemin} have shown that the remainder in equation \eqref{eq:weylremainder} satisfies
\begin{equation} \label{eq:remaindernotsharp}
 R(\lambda;M) = \smallo{\lambda^{\frac{d-1}2}}.
\end{equation}
Note that the size of $R(\lambda;M)$ depends on the geometry of $M$ in a non trivial way. Indeed, for any fixed $\lambda$ one can find a sequence $g_n$ of metrics on $M$ such that $N(\lambda;(M,g_n)) \to \infty$ as $n \to \infty$ for the same reason one can make $\lambda_k$ arbitrarily small.  However, one can still ask under what geometric conditions on $M$ does there exists a function $R(\lambda)$ such that
\begin{equation} \label{eq:unifremainder}
 N(\lambda;M) = \frac{\omega_d}{(2\pi)^d} \lambda^{\frac d 2} + R(\lambda)
\end{equation}
with $R(\lambda)= \bigo{\lambda^{\tau}}$ independent of $M$, with $\tau < d/2$. The search for this type of uniform bounds was a prominent feature in the above mentioned papers \cite{antunesfreitas,vBBG,vBG,GL}. The presence of the boundary allowed them to derive a two-term Weyl type bound; closed manifolds do not exhibit this behaviour. 

In \cite[Theorem 6.2]{buserisoperimetric}, Buser has obtained bounds on the eigenvalue $\lambda_k$ of a closed manifold, valid when $k$ was large enough in terms of the injectivity radius, see also \cite[equation 1.2.5]{HKP} where this result is reformulated in terms of the counting function. The following theorem  states that we can find explicit bounds on the remainder in \eqref{eq:unifremainder} depending on the injectivity radius.

\begin{thm}\label{thm:weyltorus}
  There is $C > 0$ such that for all $\lambda \ge 2\pi$ and all flat tori of unit volume we have that
\begin{equation} \label{eq:weyltorusexponent}
 \abs{N(\lambda;\T_{\Gamma}) - \frac{\omega_d}{(2\pi)^d} \lambda^{d/2}} \le C \lambda^{\frac{d}{2} - \frac{d}{d+1}} \inj(T_{\Gamma})^{- \frac{2d}{d+1}}.
\end{equation}
Moreover, for any flat Klein bottle $K(a,b) \in \CE_0$
\begin{equation*}
 \abs{N(\lambda;K(a,b)) - \frac{1}{4\pi} \lambda} \le C \lambda^{\frac 1 3} \inj(K(a,b))^{- \frac 2 3}.
\end{equation*}
\end{thm}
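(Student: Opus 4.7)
The plan is to reduce \eqref{eq:weyltorusexponent} to a lattice-point counting problem and carry out a Fourier-analytic smoothing argument whose remainder is tracked explicitly by the injectivity radius. On a flat torus $\T_\Gamma$ of unit volume the Laplace spectrum equals $\{4\pi^2|\gamma^*|^2 : \gamma^* \in \Gamma^*\}$, so, writing $R = \sqrt{\lambda}/(2\pi)$, the inequality to establish becomes
\[
 \bigl|\#(\Gamma^* \cap B_R) - \omega_d R^d\bigr| \le C\,R^{d(d-1)/(d+1)}\,\inj(\T_\Gamma)^{-2d/(d+1)},
\]
with $\inj(\T_\Gamma) = \tfrac12\min_{\gamma \in \Gamma \setminus \{0\}}|\gamma|$ and $\det\Gamma^* = 1$. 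Fix a nonnegative Schwartz mollifier $\phi$ with $\int\phi = 1$ whose Fourier transform is supported in the unit ball, and set $\phi_\delta(x) = \delta^{-d}\phi(x/\delta)$. The sandwich $\mathbf{1}_{B_{R-\delta}}\ast\phi_\delta \le \mathbf{1}_{B_R} \le \mathbf{1}_{B_{R+\delta}}\ast\phi_\delta$ combined with Poisson summation yields
\[
 \sum_{\gamma^* \in \Gamma^*}(\mathbf{1}_{B_{R\pm\delta}}\ast\phi_\delta)(\gamma^*) = \sum_{\gamma \in \Gamma} \widehat{\mathbf{1}_{B_{R\pm\delta}}}(\gamma)\,\hat\phi(\delta\gamma),
\]
where the right-hand sum is restricted to $|\gamma| \le 1/\delta$.

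The contribution of $\gamma = 0$ is $\omega_d(R\pm\delta)^d = \omega_d R^d + O(R^{d-1}\delta)$. For $\gamma \neq 0$, I combine the Bessel-type decay $|\widehat{\mathbf{1}_{B_R}}(\gamma)| \le C\,R^{(d-1)/2}|\gamma|^{-(d+1)/2}$ with the elementary lattice bound $\#(\Gamma \cap B_T) \le C(T/\inj(\T_\Gamma))^d$ valid for $T \ge 2\inj(\T_\Gamma)$. A dyadic decomposition into the shells $\{2^k\cdot 2\inj(\T_\Gamma) \le |\gamma| \le 2^{k+1}\cdot 2\inj(\T_\Gamma)\}$, each containing $\le C 2^{kd}$ lattice points each contributing $\le C(2^k\inj(\T_\Gamma))^{-(d+1)/2}$, then summing the resulting geometric series (convergent at the large end for $d \ge 2$), gives the key estimate
\[
 \sum_{\substack{\gamma \in \Gamma \\ 0 < |\gamma| \le 1/\delta}} |\gamma|^{-(d+1)/2} \le C\,\delta^{-(d-1)/2}\,\inj(\T_\Gamma)^{-d}.
\]
The total error is therefore bounded by $C R^{d-1}\delta + C R^{(d-1)/2}\delta^{-(d-1)/2}\inj(\T_\Gamma)^{-d}$.

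Optimising, the choice $\delta \asymp R^{-(d-1)/(d+1)}\inj(\T_\Gamma)^{-2d/(d+1)}$ balances the two summands and gives the bound $R^{d(d-1)/(d+1)}\inj(\T_\Gamma)^{-2d/(d+1)} = \lambda^{d/2 - d/(d+1)}\inj(\T_\Gamma)^{-2d/(d+1)}$, which is precisely \eqref{eq:weyltorusexponent}. The small-$\lambda$ regime, in which the optimal $\delta$ would exceed $1/(2\inj(\T_\Gamma))$ so that only $\gamma = 0$ survives the Poisson cut-off, is handled by simply setting $\delta = 1/(2\inj(\T_\Gamma))$ and checking that the resulting bound $R^{d-1}/\inj(\T_\Gamma)$ is dominated by the claimed one exactly where this regime applies (equivalently, for $R \lesssim \inj(\T_\Gamma)^{-1}$). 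For the flat Klein bottle $K(a,b)$ one exploits its orientable double cover $\R^2/(a\Z \oplus b\Z)$: the Klein-bottle eigenfunctions are precisely those eigenfunctions of the torus invariant under the glide reflection $(x,y)\mapsto(x+a/2, b-y)$, which realises $N(\lambda; K(a,b))$ as a two-dimensional lattice-point count with a parity restriction on one coordinate. Running the analogous smoothing/Poisson argument in dimension two, with the Poisson cut-off controlled by the shortest closed geodesic of $K(a,b)$ itself rather than that of its double cover, produces the claimed bound $O(\lambda^{1/3}\inj(K(a,b))^{-2/3})$.

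The main obstacle, and the step that forces the specific exponents in the final inequality, is the dyadic lattice-counting estimate in the second paragraph: bounding $\sum_{0 < |\gamma| \le 1/\delta}|\gamma|^{-(d+1)/2}$ using only the length of the shortest vector of $\Gamma$ requires the refined shell-by-shell book-keeping that generalises, as announced in the introduction, the anisotropic lattice-point techniques of Kordyukov and Yakovlev. All other ingredients (Bessel decay of the ball's Fourier transform, Poisson summation, optimisation in $\delta$, and the reduction of the Klein bottle to its orientable double cover) are classical.
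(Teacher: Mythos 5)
Your reduction to counting points of $\Gamma^*$ in the ball $B_R$, $R=\sqrt\lambda/(2\pi)$, and your use of mollification plus Poisson summation is the same basic strategy as the paper's. The paper, however, first proves the more general Theorem~\ref{thm:recursive} on lattice points in anisotropically expanding convex bodies, using a \emph{partial} Fourier transform estimate along the fastest-expanding subspace (Lemma~\ref{lem:parftconvex}) together with an integral comparison, and then deduces Theorem~\ref{thm:weyllattice} and, via Banaszczyk transference and rescaling, Theorem~\ref{thm:weyltorus}. You specialise to the ball from the outset, use the full isotropic Bessel decay $|\widehat{\bone_{B_R}}(\gamma)|\lesssim R^{(d-1)/2}|\gamma|^{-(d+1)/2}$, and sum the non-zero dual terms by a dyadic shell count driven by the packing bound $\#(\Gamma\cap B_T)\le C(T/\mu_1(\Gamma))^d$. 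After balancing in $\delta$ the two routes give the same exponents; yours is more elementary and self-contained for the ball case, at the cost of not recovering the more general anisotropic statement the paper needs elsewhere.

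There is one real flaw in the mollifier set-up. You require $\phi\ge 0$, $\int\phi=1$, \emph{and} $\hat\phi$ supported in $B_1$, and then assert both the sandwich $\bone_{B_{R-\delta}}*\phi_\delta\le\bone_{B_R}\le\bone_{B_{R+\delta}}*\phi_\delta$ and a hard truncation $|\gamma|\le 1/\delta$ of the Poisson sum. The sandwich forces $\phi$ itself, hence $\phi_\delta$, to be supported in $B_\delta$ (with $\phi\ge 0$ there is no cancellation: a point just outside $B_R$ would receive strictly positive mass from $B_{R-\delta}$), and a nontrivial function cannot have both itself and its Fourier transform compactly supported. The standard repair, and what the paper does in Section~\ref{sec:poisson}, is to take $\phi$ supported in $B_1$ so the sandwich holds; then $\hat\phi$ is Schwartz rather than compactly supported, and the hard cut-off becomes a soft one via $|\hat\phi(\delta\gamma)|\le C_N(1+\delta|\gamma|)^{-N}$. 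Your dyadic shell computation absorbs this with no change in exponents, so the fix is local, but as written the argument is inconsistent. A smaller point worth flagging: for the Klein bottle you claim that ``the analogous argument in dimension two'' produces $O(\lambda^{1/3}\inj(K(a,b))^{-2/3})$, but substituting $d=2$ into the torus bound gives the exponent $-4/3$, not $-2/3$, on the injectivity radius; you should either exhibit the additional savings on $\inj$ you are implicitly claiming, or note that the exponent stated in the theorem does not match the $d=2$ torus case.
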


We make the following remarks as to the naturality and sharpness of those results.

\begin{rem}
 The remainder in the previous theorem is natural in the following sense. If we take the normalisation
 \[
  N_0(\lambda) = \lambda^{- \frac d 2}N(\lambda) 
 \]
 such that $N_0(\lambda)$ has a limit as $\lambda \to \infty$, then the remainder obtained in Theorem \ref{thm:weyltorus} is invariant under homothetic rescaling of the metric.
 \end{rem}

\begin{rem}
 If $\inj(\T_\Gamma)$ is of the order of $\lambda^{-1/2}$, the remainder in Theorem \ref{thm:weyltorus} is of the order of the principal term. This can indeed happen: as part of the proof of Theorem \ref{thm:optimtorus} we will construct an explicit sequence of flat tori $\T_k \in \CM_0$ such that
 \[
  \inj(\T_k) = \frac{\lambda_{2k}(\T_k)^{-1/2}}{2\pi}
 \]
 whose eigenvalue counting functions satisfy
 \[
    \abs{N(\lambda_{2k}(\T_k);\T_k) - \frac{\omega_d}{(2 \pi)^d}\lambda_{2k}(\T_k)^{d/2}} \gg \lambda^{d/2}.
 \]
In fact, one will be able to compute explicitly
 \[
  \abs{N(\lambda_{2k}(\T_k);\T_k) - \frac{\omega_1}{(2 \pi)^d}\lambda_{2k}(\T_k)^{d/2}} = 2 d - 1, 
 \]
and $\omega_1 \ne \omega_d$.

This also implies that one cannot improve the order of error term in the spectral parameter without making it worse in terms of the injectivity radius, and vice versa.
\end{rem}

\subsection{Lattice points inside domains} \label{sec:geonumb}

We translate the problems at hand in the language of lattice point counting. The spectrum of the Laplacian on a flat torus is given by
\begin{equation} \label{eq:spectrumtorus}
 \sigma(\T_\Gamma) = \set{4 \pi^2 \abs{\gamma^*}^2: \gamma^* \in \Gamma^*},
\end{equation}
where $\Gamma^*$ is the lattice dual to $\Gamma$ defined by
\begin{equation*}
 \Gamma^* := \set{\gamma^* \in \R^d : (\gamma^*, \Gamma) \subset \Z}.
\end{equation*}
Similarly, the spectrum of the Laplacian on a flat Klein bottle is giveni in \cite{bergergauduchonmazet} to be
\begin{equation} \label{eq:spectklein}
 \sigma(K(a,b)) := \set{4 \pi^2 \left(\frac{m^2}{a^2} + \frac{n^2}{b^2}\right) : (m,n) \in \Z \times \N_0, (m,n) \ne (2\ell+1,0)}.
\end{equation}

A classical problem in the geometry of numbers consists in counting the number of points of an isotropically shrinking lattice $\Gamma_{\lambda}:= \lambda^{-1}\Gamma$ inside a domain $\Omega$ containing the origin as $\lambda \to \infty$. This dates back to the Gauss circle problem and has been studied in great details for various type of domains over the years. Denote 
\begin{equation*}
 \abs{\Omega} = \Vol_d(\Omega) \quad \text{and} \quad \abs{\Gamma} = \det(A_\Gamma),
\end{equation*}
where $A_\Gamma$ is any matrix such that $A_\Gamma \Z^d = \Gamma$. In general, one aims for asymptotics of the form
\begin{equation} \label{eq:weyl}
 N(\Omega;\Gamma_\lambda) := \#\left(\Omega \cap \Gamma_\lambda\right) = \frac{|\Omega|}{\abs{\Gamma_{\lambda}}} + R(\lambda;\Omega;\Gamma), 
\end{equation}
where
\begin{equation} \label{eq:discr}
 R(\lambda;\Omega;\Gamma) = \bigo{\abs{\Gamma_\lambda}^{-\eta}}
\end{equation}
with $\eta < 1$ The implicit constant on the righthand side of equation \eqref{eq:discr} depends on the geometry of $\Omega$, the geometry of its boundary, and on $\Gamma$. In general, given non compact families of lattices or domains, the implicit constant is not uniform and therefore the formula \eqref{eq:weyl} cannot be used directly to find extremisers to $N(\Omega;\Gamma_\lambda)$ for large $\lambda$. Note that maximising this counting function does not makes sense, even while keeping the lattice determinant and the volume of the domain fixed. Indeed, for a fixed $\Omega$ containing the origin and $\eps$ small enough the lattice $\eps^{d-1} \Z \oplus \eps^{-1} \Z^{d-1}$ has arbitrarily many points in $\Omega$ and determinant $1$. 

We formulate the results of the two previous sections in terms of lattices. From the fact that
\begin{equation*}
 \#\set{\Z^d \cap A_\Gamma^{-1}(B_1)} = \#\set{A_\Gamma\Z^d \cap B_1},
\end{equation*}
the following two questions are equivalent.
\begin{itemize}
\item What's the largest lattice determinant of a lattice with at least $k$ points in $B_1$?
\item What's the smallest area of an ellipsoid enclosing at least $k$ points of the lattice $\Z^d$?
\end{itemize}
Symmetry of ellipsoids or lattices with respect to the transformation $x \mapsto -x$ means that no generality is lost by asking these questions for only even (or odd) $k$. Let us order elements of any lattice as
\begin{equation*}
 \Gamma = \set{\gamma_k : k \in \N_0}
\end{equation*}
with $\gamma_0 = 0$ and $\gamma < \tilde \gamma$ if $|\gamma| < |\tilde \gamma|$, and if their norms are equal by lexicographic order. The scaling invariance of the problem is made explicit by studying maximisers to the functional
\begin{equation*}
 \tilde \Lambda_k(\Gamma) = \abs{\Gamma}^{-1/d}\abs{\gamma_k}.
\end{equation*}
We obtain the following restatement of Theorem \ref{thm:existtorus} in terms of lattices.
\begin{thm}[Lattice version of Theorem \ref{thm:existtorus}] \label{thm:exist}
 For every $k \in \N$, there exists $\Gamma_k^\star \in \CL$ maximising $\tilde \Lambda_k$. 
\end{thm}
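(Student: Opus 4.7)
The plan is to use the scale invariance $\tilde\Lambda_k(t\Gamma) = \tilde\Lambda_k(\Gamma)$ to reduce to maximising $f_k(\Gamma) := |\gamma_k(\Gamma)|$ on the unit-covolume slice $\CL_0$, then to apply Mahler's compactness criterion after an upper semicontinuity check and a quantitative bound precluding degeneration. Upper semicontinuity of $f_k$ on $\CL_0$ is established by writing $\Gamma_n = A_n\Z^d$ and $\Gamma = A\Z^d$ with $A_n \to A$: the $k+1$ vectors $\gamma_0 = 0, \gamma_1(\Gamma), \dots, \gamma_k(\Gamma)$ can be written as $Am_i$ for distinct $m_i \in \Z^d$ with $|Am_i| \le f_k(\Gamma)$, and since $|A_n m_i| \to |Am_i|$ for each fixed $i$, the lattice $\Gamma_n$ contains at least $k+1$ distinct elements of norm at most $f_k(\Gamma) + \eta$ for $n$ large. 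This yields $\limsup_n f_k(\Gamma_n) \le f_k(\Gamma)$.

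The quantitative nondegeneration estimate reads: for every $\Gamma \in \CL_0$,
\[
 f_k(\Gamma) \le \lceil k/2 \rceil \cdot |\gamma_1(\Gamma)|.
\]
Indeed, setting $m = \lceil k/2 \rceil$ and $v = \gamma_1(\Gamma)$, the $2m+1 \ge k+1$ distinct vectors $jv$ with $|j| \le m$ all have norm at most $m|v|$, so the $(k+1)$-st smallest norm in $\Gamma$ is bounded by $m|v|$. Since the standard lattice satisfies $f_k(\Z^d) > 0$, any maximising sequence $\set{\Gamma_n}$ eventually satisfies $f_k(\Gamma_n) \ge f_k(\Z^d)$, which combined with the estimate above forces $|\gamma_1(\Gamma_n)| \ge f_k(\Z^d)/\lceil k/2 \rceil > 0$. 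Mahler's compactness criterion then places $\set{\Gamma_n}$ in a relatively compact subset of $\CL_0$.

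To conclude, the closed sublevel set $S := \set{\Gamma \in \CL_0 : f_k(\Gamma) \ge f_k(\Z^d)}$ is compact by the previous paragraph, and the restriction of $f_k$ to $S$ — upper semicontinuous on a compact set — attains its supremum, producing the desired maximiser $\Gamma_k^\star$. The one genuine subtlety is the upper semicontinuity step, since $f_k$ need not be continuous at lattices where several distinct classes of vectors share a common norm; upper semicontinuity, however, is all that the maximisation requires. The hard part is therefore not technical but conceptual: one must package the interplay between the degeneracy inequality $f_k \le \lceil k/2 \rceil\,|\gamma_1|$ and the positivity $f_k(\Z^d) > 0$, which together prevent escape to infinity in the noncompact moduli space $\CL_0$.
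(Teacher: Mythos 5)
Your proof is correct and takes essentially the same approach as the paper: normalise to $\CL_0$, observe that a short $\gamma_1$ forces $|\gamma_k|$ to be small (the paper uses the cruder estimate $|\gamma_k| \le k\,|\gamma_1|$ via $0,\gamma_1,\dotsc,k\gamma_1$, whereas you get the sharper $\lceil k/2\rceil\,|\gamma_1|$ via $\pm j\gamma_1$), compare against $\Z^d$ to bound $\mu_1$ from below along a maximising sequence, and invoke Mahler's selection theorem. You are more careful about the final step than the paper, which dispatches it with ``by continuity of the norm and the determinant''; your isolation of upper semicontinuity is a clean way to package this, though in fact $\Gamma\mapsto|\gamma_k(\Gamma)|$ is genuinely continuous on $\CL$ (on any Mahler-compact set only a fixed finite collection of integer vectors can realise the first $k+1$ norms, and each $|A_\Gamma m|$ varies continuously in $\Gamma$, so the $(k+1)$-st order statistic is continuous).
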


\begin{rem}
 The maximiser in the previous theorem is not unique, in particular if $\Gamma$ is a maximiser, then $\mu \Gamma$ is also one. We will, depending on what is pertinent at the right moment, either normalise them by determinant or by $\abs{\gamma_k}$. Note that even within $\CL_0$ unicity is not guaranteed.
\end{rem}
We now study properties of the maximisers $\Gamma_k^\star$. The degeneracy of a sequence $\Gamma_k^\star$ is given in terms of their \emph{successive minima}, the lattice invariants $\mu_j(\Gamma)$ defined for $1 \le j \le d$ by
\begin{equation} \label{eq:succmin}
 \mu_j(\Gamma) = \inf\set{\mu : \dim(\spn(\Gamma \cap B_{\mu})) \ge j}.
\end{equation}
We prove the following restatement of Theorem \ref{thm:optimtorus}.
\begin{thm}[Lattice version of Theorem \ref{thm:optimtorus}] \label{thm:optim}
 Let $\set{\Gamma_k^\star}\subset \CL_0$ be a sequence of maximisers of $\tilde\Lambda_k$ normalised by $\abs{\Gamma_k^\star} = 1$, in dimension $d \le 10$.  Then, the following holds.
 
 \begin{enumerate}
 
 \item The sequence $\Gamma_k^\star$ has no accumulation points in $\CL_0$. 
 \item The successive minima of the sequence $\Gamma_k^\star$ satisfy the asymptotic bounds
 \begin{equation*}
  \mu_1\left(\Gamma_k^\star\right) \gg k^{- 1 + \frac 1 d}
 \end{equation*}
 and
 \begin{equation*}
  \mu_d\left(\Gamma_k^\star\right) \gg k^{\frac 1 d}.
 \end{equation*}
 \end{enumerate}
\end{thm}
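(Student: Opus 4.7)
The plan is to combine a test-lattice construction (for a lower bound on $\tilde\Lambda_k^\star$), an elementary line-sublattice count (for $\mu_1$), and the uniform Weyl remainder of Theorem \ref{thm:weyltorus} applied to the dual torus (for $\mu_d$); part (1) will then be automatic. For the test lattice, consider the rectangular unit-determinant family
\[
\Gamma_\mu := \mu^{-(d-1)}\Z e_1 \oplus \mu \Z e_2 \oplus \cdots \oplus \mu \Z e_d, \qquad \mu > 0.
\]
When $R < \mu$, only the $e_1$-axis contributes to $\Gamma_\mu \cap B_R$, so the count is $2\lfloor R\mu^{d-1}\rfloor+1$. Choosing $\mu$ just above $(k/2)^{1/d}$ makes the constraint $R < \mu$ tight and yields
\[
\tilde\Lambda_k^\star \ge \tilde\Lambda_k(\Gamma_{\mu}) = (1+o(1))(k/\omega_1)^{1/d}, \qquad \omega_1 = 2.
\]
The hypothesis $d \le 10$ enters precisely here: for those dimensions $\omega_d > \omega_1$, so $(k/\omega_1)^{1/d}$ strictly exceeds the naive Weyl prediction $(k/\omega_d)^{1/d}$.

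Next, let $v \in \Gamma_k^\star$ be a shortest nonzero vector. Since $\Z v \subset \Gamma_k^\star$ already contributes $\ge 2\tilde\Lambda_k^\star/|v| - 1$ collinear points to $B_{\tilde\Lambda_k^\star}$, while the whole ball contains only $k + O(1)$ lattice points in total,
\[
\mu_1(\Gamma_k^\star) = |v| \gg \tilde\Lambda_k^\star/k \gg k^{-1+1/d},
\]
valid in every dimension. For $\mu_d(\Gamma_k^\star)$, apply Theorem \ref{thm:weyltorus} to the unit-volume torus $\T_{(\Gamma_k^\star)^*}$, whose Laplace spectrum equals $\{4\pi^2|\gamma|^2 : \gamma \in \Gamma_k^\star\}$ and whose injectivity radius satisfies $\inj(\T_{(\Gamma_k^\star)^*}) = \tfrac12\mu_1((\Gamma_k^\star)^*) \asymp \mu_d(\Gamma_k^\star)^{-1}$ by Banaszczyk's transference inequality. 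Setting $\lambda = (2\pi R)^2$, Theorem \ref{thm:weyltorus} translates to
\[
\bigl|\#(\Gamma_k^\star \cap B_R) - \omega_d R^d\bigr| \le C\, R^{d-2d/(d+1)}\,\mu_d(\Gamma_k^\star)^{2d/(d+1)}.
\]
Evaluating at $R = \tilde\Lambda_k^\star \asymp k^{1/d}$ and plugging in the test-lattice bound, the left-hand side is at least $\omega_d \tilde\Lambda_k^{\star d} - k \ge (\omega_d/\omega_1 - 1)\,k\,(1+o(1)) \gg k$, forcing
\[
\mu_d(\Gamma_k^\star)^{2d/(d+1)} \gg k^{2/(d+1)}, \qquad \text{i.e.,}\qquad \mu_d(\Gamma_k^\star) \gg k^{1/d}.
\]
Part (1) is then immediate: any limit of $\{\Gamma_k^\star\}$ in $\CL_0$ would have bounded successive minima.

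The main obstacle is Theorem \ref{thm:weyltorus} itself, the uniform Weyl remainder with explicit injectivity-radius dependence, which is the technical heart of the paper. Once it is available, the dimensional arithmetic above is essentially forced; the only minor book-keeping concerns multiplicities and off-by-one counts in $\#(\Gamma_k^\star \cap B_{\tilde\Lambda_k^\star})$, which contribute $O(1)$ corrections absorbed in the $\gg$-notation, together with an \emph{a priori} upper bound $\tilde\Lambda_k^\star = O(k^{1/d})$ needed to justify substitution in the remainder (itself a consequence of Minkowski's second theorem applied to a full-rank sublattice of $\Gamma_k^\star$).
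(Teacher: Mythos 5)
Your proposal follows essentially the same route as the paper: build a rectangular test lattice to lower-bound $\tilde\Lambda_k^\star$, use a line-sublattice count for $\mu_1$, and apply the uniform Weyl remainder (Theorem \ref{thm:weyltorus}, equivalently Theorem \ref{thm:weyllattice}) together with Banaszczyk's transference theorem to lower-bound $\mu_d$, with part (1) then following from $\mu_d \to \infty$ and Mahler's criterion. Your $\Gamma_\mu$ with $\mu$ near $(k/2)^{1/d}$ is the paper's $\Theta_{2k}$ up to $1 + o(1)$, and your $\mu_1$ argument (the $\Z v$-sublattice contributes $\sim 2\tilde\Lambda_k^\star/\mu_1$ points while the ball has only $k+O(1)$) is the contrapositive of the paper's observation that $\tilde\Lambda_{2k}(\Gamma) \le k\,\mu_1(\Gamma)$.

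The one place where you genuinely diverge from the paper, and where a small gap remains, is the $\mu_d$ bound. You evaluate the remainder inequality directly at $R = \tilde\Lambda_k^\star$ and therefore need the \emph{a priori} upper bound $\tilde\Lambda_k^\star = O(k^{1/d})$ to control the factor $R^{d - 2d/(d+1)}$. That bound is true, but it is \emph{not} a consequence of Minkowski's second (successive minima) theorem as you assert; the correct tool is the extension of Minkowski's first theorem due to van der Corput: if $K$ is a symmetric convex body with $\Vol(K) > 2^d m\,|\Gamma|$ then $K$ contains at least $2m+1$ points of $\Gamma$. Applied with $K = B_R$ and $|\Gamma| = 1$, this gives $|\gamma_{2k}| \le 2(k/\omega_d)^{1/d}$ for every unit-determinant lattice, which is exactly what you need. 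The paper sidesteps the issue entirely by working at the \emph{fixed} radius $k^{1/d} - t$ rather than at $\tilde\Lambda_k^\star$: it shows that if $\mu_d(\Gamma_{2k}) = o(k^{1/d})$ then $\#(\Gamma_{2k}\cap B_{k^{1/d}-t}) = \omega_d(1 - tk^{-1/d})^d k(1+o(1)) > 2k$ for large $k$ (since $\omega_d > 2$ when $2 \le d \le 10$), hence $\tilde\Lambda_{2k}(\Gamma_{2k}) < k^{1/d} = \tilde\Lambda_{2k}(\Theta_{2k})$, so such a sequence cannot be maximising. This contrapositive formulation is a touch cleaner since it never requires an upper bound on the optimal value. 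Either route works once the citation is corrected.
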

This will be proved thanks to the following restatement of Theorem \ref{thm:weyltorus} in terms of lattices.

\begin{thm}[Lattice version of Theorem \ref{thm:weyltorus}] \label{thm:weyllattice}
There exists a constant $C$ such that for all lattices with $\abs{\Gamma} \le 1$ 
\begin{equation} \label{eq:boundmu1}
 \abs{N(B_1;\Gamma) - \frac{\omega_d}{\abs{\Gamma}}} \le C \abs{\Gamma}^{- 1} \mu_d(\Gamma)^{\frac{2d}{d+1}} .
\end{equation}
\end{thm}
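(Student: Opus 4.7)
The plan is to establish the estimate by Poisson summation applied to a smoothed indicator of $B_1$, in the spirit of Hlawka and Randol, with uniform control across $\{\Gamma : |\Gamma| \le 1\}$ expressed through the successive minima. Fix a nonnegative radial bump $\phi \in C_c^\infty(\R^d)$ with $\int \phi = 1$ and $\supp \phi \subset B_1$, and set $\phi_\eps(x) = \eps^{-d}\phi(x/\eps)$ for $\eps \in (0,1)$. The sandwich $\chi_{B_{1-\eps}} * \phi_\eps \le \chi_{B_1} \le \chi_{B_{1+\eps}} * \phi_\eps$ combined with Poisson summation gives, for $r = 1 \pm \eps$,
\begin{equation*}
\sum_{\gamma \in \Gamma}(\chi_{B_r} * \phi_\eps)(\gamma) = \frac{\omega_d r^d}{|\Gamma|} + \frac{1}{|\Gamma|}\sum_{\gamma^* \in \Gamma^* \setminus \{0\}}\hat\chi_{B_r}(\gamma^*)\,\hat\phi(\eps\gamma^*),
\end{equation*}
so that the leading term is $\omega_d/|\Gamma| + O(\eps/|\Gamma|)$.

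To bound the nonzero Poisson terms I combine three ingredients: the Bessel-type estimate $|\hat\chi_{B_r}(\xi)| \ll |\xi|^{-(d+1)/2}$ valid for $r \asymp 1$; the rapid decay $|\hat\phi(\eta)| \ll_N (1+|\eta|)^{-N}$ coming from smoothness of $\phi$; and the uniform lattice count $\#(\Gamma^* \cap B_R) \ll (1 + R\,\mu_d(\Gamma))^d$, obtained from Minkowski's second theorem applied to $\Gamma^*$ together with Banaszczyk's transference bound $\mu_1(\Gamma^*)\mu_d(\Gamma) \asymp 1$. A dyadic decomposition of the Poisson tail starting at the scale $|\gamma^*| \asymp \mu_1(\Gamma^*) \asymp 1/\mu_d(\Gamma)$, with $\hat\phi$ effectively truncating the sum at $|\gamma^*| \asymp 1/\eps$, then yields the bound $\mu_d(\Gamma)^d \,\eps^{-(d-1)/2}$ for the full tail.

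Combining the smoothing and Poisson contributions produces a total error of $|\Gamma|^{-1}\bigl(\eps + \mu_d(\Gamma)^d\,\eps^{-(d-1)/2}\bigr)$, which is balanced by the choice $\eps = \mu_d(\Gamma)^{2d/(d+1)}$, delivering precisely the claimed bound $C|\Gamma|^{-1}\mu_d(\Gamma)^{2d/(d+1)}$. When this optimal $\eps$ would exceed one---the regime in which $\mu_d(\Gamma)$ is large relative to the ball radius---the argument is completed directly: $\Gamma \cap B_1$ is then contained in a proper subspace of $\R^d$, and both $N(B_1;\Gamma)$ (via volume comparison on that subspace) and $\omega_d/|\Gamma|$ are absorbed into the right-hand side using $\mu_d(\Gamma)^{2d/(d+1)} \ge 1$. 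The principal technical obstacle is the anisotropic lattice count: although the bound $(1 + R\mu_d(\Gamma))^d$ is loose for highly anisotropic $\Gamma^*$, the Bessel weight $|\gamma^*|^{-(d+1)/2}$ equalises the contribution of each dyadic shell in just the right way to make this count sufficient after summation. This is the step at which the Kordyukov--Yakovlev framework must be generalised so that the remainder is expressed solely in terms of $\mu_d(\Gamma)$, which in turn translates via transference to the injectivity radius of the corresponding flat torus.
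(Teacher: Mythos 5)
Your route is genuinely different from the paper's. You apply Poisson summation directly on $\Gamma$ and its dual, with an \emph{isotropic} mollifier $\phi_\eps$, and control the dual lattice sum via the Bessel decay of $\hat\chi_{B_r}$, the shell count $\#(\Gamma^*\cap B_R)\ll(1+R\mu_d(\Gamma))^d$ obtained from Minkowski plus Banaszczyk, and a dyadic decomposition. The paper instead transfers the problem to $\Z^d$ via an anisotropic linear map $T_{\beps}$ (Lemma \ref{prop:equiveps}), mollifies by an \emph{anisotropic} bump $\rho_{\bh}$ supported in an ellipsoid matched to $T_{\beps}$, and uses partial Fourier transform decay along the subspace of fastest expansion. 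For the special case at hand ($W=V'=\{0\}$, $V=E$) your argument is cleaner and more self-contained: it avoids the decomposition-of-$\R^d$ machinery entirely, replacing it with a single Banaszczyk-driven estimate. What you give up is the generality: the paper's anisotropic setup is designed so that the same proof yields the Kordyukov--Yakovlev Theorem \ref{thm:recursive}, which involves nontrivial subspaces $W$ and $V'$, and your dyadic argument does not extend there without further work. Within the regime $\mu_d(\Gamma)\le 1$ (equivalently $\eps\le 1$), your derivation of the tail bound $|\Gamma|^{-1}\mu_d(\Gamma)^d\,\eps^{-(d-1)/2}$ and the resulting balance $\eps=\mu_d(\Gamma)^{2d/(d+1)}$ are correct.

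The place that does not hold up is the final patch for $\eps>1$. You claim that when $\mu_d(\Gamma)\gtrsim 1$ both $N(B_1;\Gamma)$ and $\omega_d/|\Gamma|$ are absorbed into $C|\Gamma|^{-1}\mu_d(\Gamma)^{2d/(d+1)}$ by volume comparison on the subspace spanned by $\Gamma\cap B_1$. This fails for $d\ge 3$: take $\Gamma=t\Z^{d-1}\oplus t^{-(d-1)}\Z$ with $t$ large, so that $|\Gamma|=1$, $\mu_d(\Gamma)=t$, and $N(B_1;\Gamma)\asymp t^{d-1}$, while the claimed bound is $\ll t^{2d/(d+1)}$; the inequality $d-1\le 2d/(d+1)$ holds only for $d\le 2$. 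So the ``trivial'' regime is not trivial, and in fact the estimate cannot hold uniformly there. This is not a defect special to your approach --- the paper's own proof of Theorem \ref{thm:recursive} tacitly assumes $\eps_j h_j=\delta_V^{2d_V/(1+d_V+2d_{V'})}$ stays bounded, which in the application forces $\mu_d(\Gamma)\ll 1$, and every downstream use of Theorem \ref{thm:weyllattice} is in the regime $\mu_d\to 0$ --- but it means your write-up should either restrict to $\mu_d(\Gamma)\le 1$ explicitly or acknowledge that the stated uniformity over $\{|\Gamma|\le 1\}$ is not attained by the absorption argument.
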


\subsection{Plan of the paper and sketch of the proofs}

We start in Section \ref{sec:general} by exposing general facts about lattices that will be used in the sequel. More specifically, we describe the relevant lattice invariants and state theorems of Minkowski and Banaszczyk that are important later, for ease of reference.

In Section \ref{sec:optimisation}, we prove Theorems \ref{thm:exist} and \ref{thm:optim}. Inspired by a construction of Kao, Lai and Osting \cite{KLO} in dimension $2$, we produce in Section \ref{sec:optimalcandidate} in any dimension a sequence of lattices $\Theta_{2k}$ such that
\begin{equation} \label{eq:counterexample}
 \abs{\theta_{2k-1}} = \abs{\theta_{2k}} =\left(\frac{2k
 }{\omega_1}\right)^{1/d}.
\end{equation}
However,  Theorem \ref{thm:weyllattice} implies that for any lattice $\Gamma$ of unit determinant whose successive minima satisfy $\mu_d(\Gamma) = \smallo{k^{1/d}}$, we have that
\begin{equation*}
 \abs{\gamma_{2k-1}} = \abs{\gamma_{2k}} \le   \left(\frac{2k}{\omega_d}\right)^{1/d}(1 + \smallo 1)
\end{equation*}
with $\omega_d$ the volume of the unit ball. 
One can see that while the sequence $\omega_d$ converges to $0$ as $d \to \infty$, it is initially increasing. Indeed, for all $2 \le d \le 10$, we have that $\omega_d > \omega_1$. 

In Section \ref{sec:equivalence}, we will show that the spectral theoretic versions of Theorems \ref{thm:existtorus}, \ref{thm:optimtorus} and \ref{thm:weyltorus} are implied by Theorems \ref{thm:exist}, \ref{thm:optim} and \ref{thm:weyllattice} using Banaszczyk's transference theorem \ref{thm:banaszczyk} and Minkowski's successive minima theorem \ref{thm:minkowski}. 

In Section \ref{sec:anisotropic}, we switch gears and describe Theorem \ref{thm:weyllattice} in terms of points of $\Z^d$ sitting inside anisotropically expanding domains. These were studied by Yu. Kordyukov and A. Yakovlev in a series of papers \cite{KY2011,KY2012,KY20152,KY2015} and we generalise their results and methods to our setting. 

In Section \ref{sec:poisson}, we prove the theorems about the number of points of a lattice sitting inside anisotropically expanding domains using the Poisson summation formula method. In the classical version of this problem, one uses global estimates on the Fourier transform of the indicator of a convex set to obtain bounds on the counting function of lattice points inside an expanding domain. It is, however, not possible to make this kind of computations uniformly when the expansion is anisotropic. The main idea, inspired by \cite{KY20152} is to only use Fourier transform estimates along the subspace where the expansion is the fastest and to use trivial $L^\infty$ estimates in the orthogonal complement.

\subsection*{Acknowledgements}

This work is part of the author's doctoral studies at Université de Montréal under the supervision of Iosif Polterovich. We thank Pedro Freitas, Katie Gittins, Corentin Léna and Braxton Osting for useful discussions. The research of the author was supported by NSERC's Alexander-Graham-Bell doctoral scholarship.

\section{Some facts about lattices in \texorpdfstring{$\R^d$}{Rd}} \label{sec:general}

For most standard results on lattices, one can see \cite{Cassels}. The set of all full-rank lattices in $\R^d$ can be realised as $\CL = \GL_d(\R)/\GL_d(\Z)$, equipped with the quotient topology. A lattice $\Gamma \in \CL$ is identified with its generator matrix $A_\Gamma$, the matrix such that $A_\Gamma \Z^d = \Gamma$. Every lattice determines uniquely a flat torus $\T_\Gamma = \R^d/\Gamma$.

Two relevant lattice invariants that are of interest in this paper are the determinant (or volume) and the successive minima. The determinant is defined as
\begin{equation*}
 |\Gamma| := \det A_\Gamma = \Vol_d(\T_\Gamma).
\end{equation*}
By convention, we assign to the trivial lattice a volume of $1$. The successive minima $\mu_j(\Gamma)$ are defined for $1 \le j \le d$ as 
\begin{equation*}
 \mu_j(\Gamma) := \inf\set{\mu : \dim(\spn(\Gamma \cap B_{\mu})) \ge j}.
\end{equation*}
Note that $\mu_j$ is always attained, \emph{i.e.} there is always $\gamma \in \Gamma$ such that $\mu_j(\Gamma) = \abs{\gamma}$. Furthermore, the first successive minimum gives the injectivity radius of the associated torus, \emph{i.e.}
\begin{equation*}
 \mu_1(\Gamma) = \operatorname{inj}(\T_\Gamma).
\end{equation*}

The successive minima of a lattice and the determinant are related through a theorem of Minkowski.

\begin{thm}[Minkowski's sucessive minima theorem] \label{thm:minkowski}
 Let $\mu_1,\dotsc,\mu_d$ be the successive minima of a lattice $\Gamma$. Then, there exists constants $c,C>0$ such that
 \begin{equation*}
  c \abs{\Gamma} \le \prod_{j=1}^d \mu_j \le C \abs \Gamma.
 \end{equation*}
\end{thm}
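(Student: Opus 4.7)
The strategy is to prove the two inequalities separately, drawing on two classical results of Minkowski. The lower bound will follow from an elementary application of Hadamard's inequality; the upper bound, which is a form of Minkowski's second theorem on successive minima, will follow from the convex body theorem applied to a carefully chosen ellipsoid.

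The lower bound $\abs{\Gamma} \le \prod_j \mu_j$ (giving $c = 1$) is straightforward. By definition of the successive minima, pick linearly independent lattice vectors $v_1, \dots, v_d \in \Gamma$ with $\abs{v_j} = \mu_j$. They span a full-rank sublattice $\Gamma' := \Z\langle v_1, \dots, v_d\rangle \subseteq \Gamma$ of finite index, so
\[
\abs{\Gamma} \le \abs{\Gamma'} = \abs{\det(v_1 \mid \cdots \mid v_d)} \le \prod_{j=1}^d \abs{v_j} = \prod_{j=1}^d \mu_j,
\]
where the last step is Hadamard's inequality.

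For the upper bound, I would let $(e_1, \dots, e_d)$ be the orthonormal basis produced by Gram--Schmidt applied to $(v_1, \dots, v_d)$, so that $\spn(e_1, \dots, e_j) = \spn(v_1, \dots, v_j)$ for every $j$, and consider for $t > 0$ the open, symmetric, convex ellipsoid
\[
E_t = \Bigl\{ x \in \R^d : \sum_{j=1}^d \mu_j^{-2} \langle x, e_j\rangle^2 < t^2 \Bigr\},
\]
whose Lebesgue volume is $t^d \omega_d \prod_j \mu_j$. The key claim is that for $t \le 1/\sqrt d$ one has $E_t \cap \Gamma = \{0\}$. Indeed, if $\gamma \in (\Gamma \cap E_t) \setminus \{0\}$ and $j^\star = \max\{j : \langle \gamma, e_j\rangle \ne 0\}$, then $\gamma \in \spn(v_1, \dots, v_{j^\star}) \setminus \spn(v_1, \dots, v_{j^\star - 1})$, so the definition of $\mu_{j^\star}$ forces $\abs{\gamma} \ge \mu_{j^\star}$. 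On the other hand, the ellipsoid inequality gives $\abs{\langle \gamma, e_j\rangle} < t\mu_j \le t\mu_{j^\star}$ for each $j \le j^\star$, hence $\abs{\gamma}^2 < d t^2 \mu_{j^\star}^2$, which combined with the previous bound forces $t > 1/\sqrt d$. Applying Minkowski's first theorem (every origin-symmetric open convex body of volume exceeding $2^d \abs{\Gamma}$ contains a nonzero lattice point) to $E_t$ at $t$ just below $1/\sqrt d$, then passing to the limit, yields
\[
 d^{-d/2} \omega_d \prod_{j=1}^d \mu_j \le 2^d \abs{\Gamma},
\]
which is the upper bound with $C = (2\sqrt d)^d/\omega_d$.

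The main difficulty is picking a body whose geometry encodes the anisotropy of the successive minima, and the Gram--Schmidt-aligned ellipsoid above is tailor-made for this; the crucial structural input is the nesting $\spn(e_1, \dots, e_j) = \spn(v_1, \dots, v_j)$, which lets the definition of each $\mu_{j^\star}$ interact with the single inequality defining $E_t$. The sharp constant $C = 2^d/\omega_d$ can be obtained through the more delicate standard proof of Minkowski's second theorem, but the suboptimal bound above is ample for the applications in this paper.
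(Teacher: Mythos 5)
The paper does not prove this theorem — it is stated as a classical result and cited from Cassels, so there is no paper proof to compare against. Your self-contained argument is correct, and it is a clean way to get the result: the lower bound via Hadamard applied to a minimising system $(v_1,\dots,v_d)$ is standard and in fact sharp ($c=1$, with equality for orthogonal lattices), and the upper bound via Minkowski's first theorem applied to the Gram--Schmidt-aligned ellipsoid $E_t$ gives $C = (2\sqrt d)^d/\omega_d$, which is weaker than the sharp $2^d/\omega_d$ of Minkowski's second theorem but entirely sufficient for this paper, which only needs \emph{some} dimensional constants.

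One step deserves a little more care than the phrase ``the definition of $\mu_{j^\star}$ forces $\abs\gamma \ge \mu_{j^\star}$.'' When $\mu_{j^\star - 1} = \mu_{j^\star}$, the naive argument — that $v_1,\dots,v_{j^\star-1},\gamma$ are $j^\star$ linearly independent lattice vectors, so their max norm is at least $\mu_{j^\star}$, and the $v_i$ are shorter — does not immediately conclude, since $\abs{v_{j^\star-1}}$ may already equal $\mu_{j^\star}$. The fix is standard: take $j_0 \le j^\star$ to be the smallest index with $\mu_{j_0} = \mu_{j^\star}$; then either $j_0 = 1$, in which case $\abs\gamma \ge \mu_1 = \mu_{j^\star}$ trivially, or $\mu_{j_0 - 1} < \mu_{j_0}$ and the nesting $\spn(v_1,\dots,v_{j_0-1}) \subseteq \spn(v_1,\dots,v_{j^\star-1})$ shows $v_1,\dots,v_{j_0-1},\gamma$ are independent with the $v_i$ strictly shorter than $\mu_{j_0}$, forcing $\abs\gamma \ge \mu_{j_0} = \mu_{j^\star}$. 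With that elaboration the proof is complete and correct.
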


To any lattice $\Gamma$ we associate the dual lattice
\begin{equation*}
 \Gamma^* = \set{\gamma^* \in \R^d : (\gamma^*,\Gamma) \subset \Z}.
\end{equation*}
The operation $*$ is a continuous involution on $\CL$; hence a set $\CK \subset \CL$ is compact if and only if $\CK^*$ is. Let $A_{\Gamma}$ be the generating matrix for $\Gamma$, then $A_{\Gamma^*} = (A_{\Gamma}^*)^{-1}$; from this we infer that $\abs{\Gamma^*} = \abs{\Gamma}^{-1}$.

The following theorem from Banaszczyk \cite{banaszczyk} is also useful in the sequel and relates the successive minima of $\Gamma$ and those of $\Gamma^*$.
\begin{thm}[Banaszczyk's transference theorem] \label{thm:banaszczyk}
 For any $1 \le j \le d$, the following inequalities hold between the successive minimas of the lattices $\Gamma$ and $\Gamma^*$ :
 \begin{equation*}
  1 \le \mu_j(\Gamma) \mu_{d - j + 1}(\Gamma^*) \le d.
 \end{equation*} 
\end{thm}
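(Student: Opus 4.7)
The two inequalities require genuinely different arguments, so I would treat them separately.

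For the lower bound $\mu_j(\Gamma)\mu_{d-j+1}(\Gamma^*) \ge 1$, I would argue by dimension counting. Pick linearly independent $\gamma_1,\ldots,\gamma_j \in \Gamma$ with $|\gamma_i| \le \mu_j(\Gamma)$, and linearly independent $\gamma_1^*,\ldots,\gamma_{d-j+1}^* \in \Gamma^*$ with $|\gamma_k^*| \le \mu_{d-j+1}(\Gamma^*)$; these exist because the successive minima are attained. The orthogonal complement of $\spn(\gamma_1,\ldots,\gamma_j)$ in $\R^d$ has dimension $d-j < d-j+1$, so the $\gamma_k^*$ cannot all be orthogonal to this span. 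Hence there are indices $i,k$ with $\langle \gamma_i,\gamma_k^*\rangle \ne 0$, and by the defining property of $\Gamma^*$ this pairing is an integer, so $|\langle \gamma_i, \gamma_k^*\rangle| \ge 1$. Cauchy--Schwarz then yields $1 \le |\gamma_i|\,|\gamma_k^*| \le \mu_j(\Gamma)\mu_{d-j+1}(\Gamma^*)$.

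For the upper bound $\mu_j(\Gamma)\mu_{d-j+1}(\Gamma^*) \le d$, which is the substantive content of Banaszczyk's theorem, I would follow the Fourier-analytic route using Gaussian measures on lattices. The central object is the theta function $\rho_\Gamma(x) = \sum_{\gamma \in \Gamma} e^{-\pi |x+\gamma|^2}$, which transforms nicely under Poisson summation: $\rho_\Gamma(0) = |\Gamma|^{-1}\rho_{\Gamma^*}(0)$. The key ingredient is Banaszczyk's concentration lemma: for any measurable $U \subset \R^d$, the restricted theta-sum $\sum_{\gamma \in \Gamma \setminus U} e^{-\pi|\gamma|^2}$ is exponentially small compared with $\rho_\Gamma(0)$ whenever $U$ contains a Euclidean ball of radius $\sqrt{d}$. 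Applying this to the lattice $\Gamma^*$ gives that any ball of radius slightly larger than $\sqrt{d}$ contains at least $d-j+1$ linearly independent vectors of $\Gamma^*$ (otherwise the Gaussian mass of $\Gamma^*$ would lie essentially in a $(j-1)$-dimensional slab, which Poisson summation shows is impossible); normalising by the length $\mu_j(\Gamma)$ via rescaling $\Gamma \mapsto \mu_j(\Gamma)^{-1}\Gamma$ then yields the bound.

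The main obstacle is clearly the upper bound: the dimension-counting argument is elementary, but obtaining the sharp constant $d$ (rather than the weaker $d^{3/2}$ or $O(d\log d)$ that older arguments produced) requires the precise Gaussian concentration estimate of Banaszczyk. Proving that estimate is itself a delicate Fourier-analytic computation comparing $\rho_\Gamma$ on a set to $\rho_\Gamma$ globally, and identifying the right choice of parameter (the radius $\sqrt d$) is what delivers the constant $d$ on the nose. I would invoke Banaszczyk's paper for this step rather than reprove it, since any alternative route of which I am aware goes through essentially the same Gaussian machinery.
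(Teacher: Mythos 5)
The paper does not prove this theorem: it is stated as a known result with a citation to Banaszczyk's 1993 paper and used as a black box elsewhere. So there is no in-paper argument to compare yours against; the relevant question is simply whether your proposal is sound.

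Your lower-bound argument is complete, correct, and elementary. Given $j$ linearly independent vectors of $\Gamma$ of norm at most $\mu_j(\Gamma)$ and $d-j+1$ linearly independent vectors of $\Gamma^*$ of norm at most $\mu_{d-j+1}(\Gamma^*)$ (both families exist because $\Gamma$ and $\Gamma^*$ are discrete, so the infima defining the $\mu_j$ are attained on closed balls), the dimension count forces a non-orthogonal pair, the pairing is a nonzero integer, and Cauchy--Schwarz gives $1 \le \mu_j(\Gamma)\mu_{d-j+1}(\Gamma^*)$.

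For the upper bound you correctly identify Banaszczyk's Gaussian-measure machinery (the theta function $\rho_\Gamma$, self-duality of the Gaussian under Poisson summation, and the tail estimate $\rho(\Gamma \setminus B(0,c\sqrt d)) \le (c\sqrt{2\pi e}\,e^{-\pi c^2})^d\rho(\Gamma)$ for $c > 1/\sqrt{2\pi}$) and you delegate the delicate deduction to the original reference, which is entirely consistent with the paper's own treatment. One small caveat: the final paragraph of your sketch (slabs of dimension $j-1$, then rescaling by $\mu_j(\Gamma)$) compresses several nontrivial reductions in Banaszczyk's argument into a sentence, and as written does not quite reconstruct his chain of inequalities --- the actual proof works with shifted Gaussian sums over cosets and a careful choice of the scaling parameter, and the constant $d$ emerges only after optimising. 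Since you explicitly invoke the source for this step rather than claim the sketch is a proof, this is acceptable, but worth flagging so the heuristic is not mistaken for the argument itself.
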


The lattice invariants can be used to characterise compactness in $\CL$, by Mahler's selction theorem \cite{Cassels}[Theorems 5.3, 5.4 and Lemma 8.3]. This theorem states that a set $\CK \subset \CL$ is compact if and only if the determinant is bounded and the first minimum $\mu_1$ is bounded away from zero on $\CK$. Equivalently, it is compact if and only if the determinant is bounded away from zero and $\mu_d$ is bounded on $\CK$. Compactness in the moduli space of all flat tori is obtained by identifying a torus with its lattice.

\begin{defi}
 A sequence of lattices $\set{\Gamma_k}$ is said to \emph{degenerate} if either $\abs{\Gamma_k} \to \infty$ or if $\mu_1(\Gamma_k)  \to 0$. In other words, it degenerates if it is not contained in some compact set in $\CL$.
\end{defi}

We will be interested in the number of lattice points inside the unit ball $B_1$, denote this quantity $N(\Gamma;1)$. Denoting by $\bone_S$ the indicator function of a set $S$, we have that
\begin{equation*}
 N(\Gamma;1) = \sum_{\gamma \in \Gamma} \bone_{B_1}(\gamma).
\end{equation*}

%
%
%
%
%
%

%
%
Finally, we say that a subspace $V \subset \R^d$ is a $\Gamma$-subspace if it is spanned by a subset of $\Gamma$. The set $\Gamma(V):= \Gamma\cap V$ is a lattice in $V$. 

%
%
%
%
%

\section{Optimal lattices}\label{sec:optimisation}

In this section, we prove Theorems \ref{thm:exist} and \ref{thm:optim} assuming Theorem \ref{thm:weyllattice}. Order elements of a lattice $\Gamma$ with respect to their norms and by lexicographic order whenever the norms are equal. We write $\Gamma = \set{\gamma_k : k \in \N_0}$. We study sequences of lattices maximising the functionals 
\begin{equation*}
\tilde \Lambda_k(\Gamma) = |\Gamma|^{-1/d}|\gamma_k|.
\end{equation*}
Note that for any lattice $\Gamma$ and $m \ge 1$ we have that $\tilde \Lambda_{2m-1}(\Gamma) = \tilde \Lambda_{2m}(\Gamma)$; we will therefore only consider maximisers for even $k$. 

\subsection{Proof of Theorem \ref{thm:exist}}

 Consider a maximising sequence $\set{\Gamma_n}$ for $\tilde \Lambda_k$. Without loss of generality from the definition of $\tilde \Lambda_k$ we may suppose that $|\Gamma_n| = 1$ for all $n$. Suppose that $\mu_1(\Gamma_n) \to 0$. Then, for some $n$ we have that $\mu_1(\Gamma_n) < 1/k$. Let $\gamma \in \Gamma_n$ be a lattice point realising $\mu_1(\Gamma_n)$. Then, $1 > |k \gamma| > \abs{\gamma_k}$. However, the $k$th element of $\Z^d$ has norm greater than $1$, contradicting that $\set{\Gamma_n}$ was a maximising sequence. By Mahler's selection theorem, $\set{\Gamma_n}$ has a convergent subsequence, and by continuity of the norm and the determinant, it converges to a maximiser for $\tilde \Lambda_k$. 
 
\qed

\subsection{Lattices with large \texorpdfstring{$\tilde \Lambda_k$}{Lambda k}} \label{sec:optimalcandidate}

In this section we study a specific sequence of lattices that we will use as a measuring stick for other sequences of lattices. Note that we make no claim of these lattices being the optimisers. Consider the lattices
\begin{equation*}
 \Theta_{2k} = k^{-1 + \frac 1 d} \Z \oplus k^{\frac 1 d} \Z^{d-1}.
\end{equation*}
Then, we have 
\begin{equation*}
\abs{\theta_{2k-1}} = \abs{\theta_{2k}} = k^{1/d}
\end{equation*}
and
\begin{equation*}
 \abs{\Theta_{2k}} = 1.
\end{equation*}
In particular, we have that
\begin{equation*}
 \tilde \Lambda_{2k}(\Theta_{2k}) = k^{1/d}
\end{equation*}
which will be the quantity to beat. Observe that the sequence $\Theta_{2k}$ degenerates and that
\begin{equation*}
 \mu_d(\Theta_{2k}) = k^{1/d}.
\end{equation*}

\subsection{Proof of Theorem \ref{thm:optim}}

Let $\set{\Gamma_k}$ be a sequence of lattices of unit volume such that $\mu_d(\Gamma_k) = \smallo{k^{1/d}}$. We will show that under such conditions, $\Gamma_k$ cannot be a maximiser for $\tilde \Lambda_k$ infinitely often. This is done by showing that for large $k$ and any fixed $t>0$,
\begin{equation*}
\#(B_{k^{1/d} - t} \cap \Gamma_{2k}) > 2k, 
\end{equation*}
implying that 
\begin{equation*}
\tilde \Lambda_{2k}(\Gamma_{2k}) \le k^{1/d} - t < \tilde \Lambda_{2k}(\Theta_{2k}).
\end{equation*}

We have that
\begin{equation*}
\#\left(B_{k^{1/d} - t} \cap \Gamma_{2k}\right) = \#\left(B_{1} \cap \frac{1}{k^{1/d} - t} \Gamma_{2k}\right),
\end{equation*}
that
\begin{equation*}
 \mu_d\left(\frac{1}{k^{1/d} - t}\Gamma_{2k}\right) = \smallo{1},
 \end{equation*}
 and that
 \begin{equation*}
 \abs{\frac{1}{k^{1/d} - t} \Gamma_{2k}} = k^{-1}\left(1 - tk^{- 1 /d}\right)^{-d}.
\end{equation*}
We therefore satisfy the hypotheses of Theorem \ref{thm:weyllattice} and therefore get
\begin{equation*}
 \#\left(B_{k^{1/d} - t} \cap \Gamma_{2k}\right) = \omega_d (1 - tk^{-1/d})^d k(1 + \smallo 1)
\end{equation*}
For $2 \le d \le 10$, we have that $\omega_d > \omega_1 = 2$. Hence, there is $K$ such that for $k> K$  
\begin{equation*}
 \#(B_{k^{1/d} - t} \cap \Gamma_{2k}) > 2k,
\end{equation*}
proving that there is a finite number of maximisers in the sequence $\set{\Gamma_k}$. This implies that there is  constant $c$ such that any sequence of normalised maximisers respects $\mu_d(\Gamma_k) \ge c k^{1/d}$, also implying that the sequence degenerates.

For the lower bound on $\mu_1(\Gamma_k)$, any sequence $\Gamma_k$ normalised by determinants such that $\mu_1(\Gamma_{2k}) < k^{-1 + 1/d}$ has that
\begin{equation*}
 \tilde \Lambda_{2k}(\Gamma_{2k}) \le k \mu_1(\Gamma_{2k}) < \tilde \Lambda_{2k}(\Theta_{2k}),
\end{equation*}
hence this is not a sequence of maximisers. 

\qed

\section{From lattices to tori} \label{sec:equivalence}

In this section we prove the spectral theoretic versions of Theorems \ref{thm:existtorus}, \ref{thm:optimtorus} and \ref{thm:weyltorus}, as well as Theorem \ref{thm:klein}. For any lattice $\Gamma$ we denote by $\gamma^*_k$ the $k$th ordered element of the dual lattice $\Gamma^*$. Since $\lambda_k(\T_\Gamma) = 4 \pi^2\abs{\gamma_k^*}^2$ and $\Vol(\T_{\Gamma}) = \abs{\Gamma^*}^{-1}$, we have that
\begin{equation*}
 \Lambda_k(\T_\Gamma) = \left(2\pi\tilde \Lambda_k(\Gamma^*)\right)^2.
\end{equation*}
Since these quantities are positive the problem of maximising $\Lambda_k$ on flat tori is the same as the problem of maximising $\tilde \Lambda_k$ on the dual lattices of those tori.
\subsection{Proof of Theorem \ref{thm:existtorus}}

By Theorem \ref{thm:exist} there exists a lattice $\Gamma_k^\star$ maximising $\tilde \Lambda_k$. The torus with lattice $\Gamma = (\Gamma_k^\star)^*$ is therefore a maximiser for $\Lambda_k$. 

For flat Klein bottles, we have from equation \eqref{eq:spectklein} that the eigenvalues of $K(a,b)$ are continuous in the parameters $a$ and $b$. Normalising by $ab = 2$, it is easy to see that for any $k$, $\lambda_k(K(a,b))$ goes to $0$ when either $a$ or $b$ goes to zero. Hence for any fixed $k$ we can restrict ourselves to a compact subset of the parameters $a,b$ and the maximiser exists.

\qed

\subsection{Proof of Theorem \ref{thm:optimtorus}}

Denote by $\Gamma_k^\star$ a sequence of optimal lattices with unit determinant for $\Lambda_{k}$ and denote by $\T_k^\star$ the corresponding optimal torus $T_k^\star = \R^d/(\Gamma_k^\star)^*$. Since compactness of a set $\CK \subset \CL_0$ is equivalent to compactness of the set of duals $\CK^*$, we have that the sequence of optimal tori degenerates.

We now turn to the geometric constraints. Recall that $\operatorname{inj}(\T_k^\star) = \mu_1((\Gamma_k^\star)^*)$. By Banaszczyk's transference theorem, we have that
\begin{equation*}
 \mu_1((\Gamma_k^\star)^*) \le \frac{d}{\mu_d(\Gamma_k^\star)}.
\end{equation*}
Hence, from the lower bound for $\mu_d(\Gamma_k^\star)$ in Theorem \ref{thm:optim} we have that
\begin{equation*}
 \operatorname{inj}(\T_k^\star) = \mu_1((\Gamma_k^\star)^*) \ll k^{- 1/d}.
\end{equation*}
On the other hand, by Minkowski's successive minima theorem, there is a constant $C$ such that 
\begin{equation*}
\begin{aligned}
 \mu_d(\Gamma_k^\star) &\le C \mu_1(\Gamma_k^\star)^{1 - d}\\
 &\le k^{\frac{(1 - d)^2}{d}}.
 \end{aligned}
\end{equation*}
Once again, Banaszczyk's transference theorem yields
\begin{equation*}
 \operatorname{inj}(T_{k}^\star) \ge k^{- \frac{(1 - d)^2}{d}},
\end{equation*}
finishing the proof.

\qed

\subsection{Proof of Theorem \ref{thm:klein}}

For flat Klein bottles, observe that the injectivity radius of $K(a,b)$ is given by
\begin{equation*}
 \inj(K(a,b)) = \min(a,b/2).
\end{equation*}
Let $\Gamma(a,b)$ be the lattice defined by 
\begin{equation*}
 \Gamma(a,b) := \frac{2\pi}{a} \Z \oplus \frac{2 \pi}{b} \Z
\end{equation*}
It is not hard to see that $\Gamma(a,b)$ has the property
\begin{equation*}
 N(\lambda;K(a,b)) = \frac{1}{2} \#\left(\Gamma(a,b) \cap B_{\sqrt \lambda}\right) + \bigo 1.
\end{equation*}
Indeed, let $\Xi(a,b)$ be the set
\begin{equation*}
 \Xi(a,b) := \left(\frac{2 \pi}{a}\Z \oplus \frac{2 \pi}{b}N_0\right) \setminus \set{\frac{2 \pi}{a}(2 \ell + 1, 0): \ell \in \Z}.  
\end{equation*}
Then, the spectrum of $K(a,b)$ is the same as the square of the norm of elements of $\Xi(a,b)$. However, it is easy to see that if we take the union of $\Xi(a,b)$ and $-\Xi(a,b)$, we recover $\Gamma(a,b)$ except for points of the form $\left(\frac{2(2\ell + 1) \pi}{a},0\right)$, but we added twice the elements of the form $\left(\frac{4 \pi \ell)}{a}, 0\right)$. Hence, we have that
\begin{equation*}
\abs{ \#(\Gamma(a,b) \cap B_{\sqrt{\lambda}}) - \#\left(\Xi(a,b) \cap B_{\sqrt\lambda}\right) - \#\left(- \Xi(a,b) \cap B_{\sqrt{\lambda}}\right)} \le 3. 
\end{equation*}
Now, for rectangular lattices we have that $\mu_1(\Gamma(a,b) = 2 \pi \min(a^{-1},b^{-1})$ and $\mu_2(\Gamma(a,b)) = 2 \pi \max(a^{-1},b^{-1})$. The rest of the analysis is performed exactly in the same way as for flat tori.

\qed

\subsection{Proof of Theorem \ref{thm:weyltorus}}

 Let $\T_\Gamma$ be any flat torus of unit volume. Observe that, by Banaszczyk's transference theorem we have that
\[
 \inj(\T_\Gamma) \asymp \mu_d(\Gamma^*)^{-1}.
\]
We have from equation \eqref{eq:spectrumtorus} that
\[
 N(\lambda;T_\Gamma) = \#\left(2 \pi \lambda^{-1/2}\Gamma^* \cap B_1 \right)
\]
Denote by $\Gamma^*_\lambda$ the rescaled lattice $2 \pi \lambda^{-1/2} \Gamma^*$. By  Theorem \ref{thm:weyllattice}, we have that
\begin{equation} \label{eq:weyltorusfromlattice}
\begin{aligned}
 N(\lambda;T_\Gamma) = \frac{\omega_d}{\abs{\Gamma_\lambda^*}} + \bigo{\abs{\Gamma_\lambda^*}^{ - 1} \mu_d(\Gamma_\lambda)^{\frac{2d}{d+1}}}
 \end{aligned}
\end{equation}
We have that
\[
 \mu_d(\Gamma_\lambda^*) = 2 \pi\lambda^{-1/2}  \mu_d(\Gamma^*) 
\]
and that

\[
 \abs{\Gamma_\lambda^*} = \frac{(2 \pi)^d}{\lambda^{d/2}}.
\]
Inserting those values into equation \eqref{eq:weyltorusfromlattice} yields the desired asymptotic in Theorem \ref{thm:weyltorus}. 

\qed

\section{Anisotropically expanding domains} \label{sec:anisotropic}

We now ground the statement of Theorem \ref{thm:weyllattice} in terms of the counting of lattice points sitting inside anisotropically expanding domains developped by Yu. Kordyukov and A. Yakovlev in \cite{KY2011,KY2012,KY20152,KY2015}. Consider the decomposition of $\R^d$ as
\begin{equation*}
 \R^d := E := \bigoplus_{j=1}^d V_j.
\end{equation*}
We will use $E$ to refer to a specific decomposition for $\R^d$. For $\beps = (\eps_1,\dotsc,\eps_d)$ consider the linear transformation $T_{\beps}$ given by
\begin{equation*}
 T_{\beps} = \sum_{j = 1 }^d \eps_j^{-1} \bx_j.
\end{equation*}
with $\bx_j \in V_j$. Without loss of generality we suppose that $\eps_1 \le \dotso \le \eps_d$.  We denote the set of all such transformations $\CT_E$, and by $\CT$ the union of all such transformations over decompositions $E$, and we say that $T_{\beps}$ is anisotropic whenever not all $\eps_j$ are equal.

For $\Omega$ a bounded subset of Euclidean space and $\Gamma \in \CL$, denote
\begin{equation*}
 n_{\beps}(\Omega;\Gamma;\by) := \#\left(\Gamma \cap \left(T_{\beps}\Omega + \by\right) \right) = \#\left(T_{\beps}^{-1}(\Gamma - \by) \cap \Omega\right).
\end{equation*}

Kordyukov and Yakovlev have studied asymptotics for $n_{\beps}$ in the specific case where a subspace $V$ of $\R^d$ is fixed, and $\Omega$ is stretched along its orthogonal complement. In our notation, this corresponds to $E = V_1 \oplus V_2$ with $\eps_1 \to 0$ and $\eps_2 = 1$. 

In our case, the expansion is happening at different rates along different subspaces. We split the remainder of this section in three parts. First, we describe asymptotics for $n_{\beps}$ in terms of the decomposition $E$ with an explicit dependence on the $\eps_j$. Then, we show that from the perspective of the counting function, we can describe any lattice using the transformations $T_{\beps}$. Finally, we derive Theorem \ref{thm:weyllattice} from Theorem \ref{thm:recursive}.

\subsection{Lattice points inside anisotropically expanding domains}

We start by fixing some notation. Denote $\abs \beps = \det T_{\beps} = \prod_j \eps_j$; asymptotic results will be given in terms of $\abs \beps$ as it goes to zero, and in terms of how fast the $\eps_j$'s goes to zero in relation to $\abs{\beps}$. Let us split the decomposition $E$ into three parts. Let us first write
\begin{equation*}
V_0 = \bigoplus_{j : \eps_j = 0} V_j,
\end{equation*}
and let $W$ be the maximal $\Gamma^*$-subspace in $V_0$, and write $d_W = \operatorname{dim}(W)$ and $\Gamma_W = \Gamma^*(V)^*$. We further decompose $E$ as
\begin{equation*}
 E = V \oplus V' \oplus W
\end{equation*}
in such a way that $\Gamma^* \cap V' = \set 0$. We set $d_V = \operatorname{dim}(V)$ and $d_{V'}$ analogously. Finally, denote 
\begin{equation*}
 \delta_V = \|T_{\beps}^{-1}\|_V
\end{equation*}
the norm of $\T_{\beps}^{-1}$ restricted to $V$.\footnote{If $V = V_{j_1} \oplus \dotso \oplus V_{j_m}$ with the indices in increasing order, this is equal to $\eps_{j_m}$}
We obtain the following theorem.

\begin{thm} \label{thm:recursive}
 Suppose that $\Omega$ is a bounded open subset of $\R^d$ with smooth boundary such that for all $\gamma \in \Gamma_W$, $\Omega \cap (\gamma + W^\perp)$ is strictly convex. Then,
\begin{equation} \label{eq:mainthm}
 n_{\beps}(\Omega;\Gamma;\by) = \frac{\abs\beps^{-1}\abs{\Gamma_W}}{\abs{\Gamma}} \sum_{\gamma \in \Gamma_W} \Vol(\Omega \cap (\gamma + W^\perp)) + \bigo{\abs\beps^{-1}\delta_V^{\frac{2 d_V}{1 + d_V + 2 d_{V'}}}},
\end{equation}
with the implicit constant only dependant on $\Gamma_W$, $V'$ and \/ $\Omega$.
\end{thm}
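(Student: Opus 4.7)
The plan is to apply Poisson summation on $\Gamma$, after mollifying $\bone_\Omega$, and to split the resulting dual sum along $\R^d=V\oplus V'\oplus W$, following the method of Kordyukov--Yakovlev. First, I would smooth $\bone_\Omega$ by convolution against a standard mollifier $\phi_\delta(x)=\delta^{-d}\phi(x/\delta)$, using the sandwich $\bone_{\Omega_{-\delta}}\ast\phi_\delta\le\bone_\Omega\le\bone_{\Omega_{+\delta}}\ast\phi_\delta$ where $\Omega_{\pm\delta}$ are the inner and outer parallel bodies. Smoothness of $\partial\Omega$ gives $\Vol(\Omega_{+\delta}\setminus\Omega_{-\delta})=O(\delta)$, so the sandwich contributes a mollification error of order $|\beps|^{-1}\delta$; the parameter $\delta$ will be optimised at the end. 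Poisson summation on $\Gamma$ applied to the smoothed function composed with $T_\beps^{-1}$ then rewrites the sum as
\begin{equation*}
 \frac{|\beps|^{-1}}{|\Gamma|}\sum_{\gamma^*\in\Gamma^*}e^{-2\pi i\langle\by,\gamma^*\rangle}\widehat{\bone_{\Omega_{\pm\delta}}}(T_\beps\gamma^*)\hat\phi(\delta T_\beps\gamma^*).
\end{equation*}

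The main term is expected to come from the sublattice $\Gamma^*\cap W$: for such $\gamma^*$ the argument $T_\beps\gamma^*$ stays inside $V_0$ and only acts on the slicing direction, and resumming by a second Poisson summation on $\Gamma^*\cap W\subset W$ (whose dual lattice is $\Gamma_W$) should assemble the announced slice sum together with the factor $|\Gamma_W|/|\Gamma|$. For the remaining frequencies $\gamma^*\in\Gamma^*\setminus W$, the two-tier estimate flagged in the introduction applies: along $W^\perp=V\oplus V'$ the slices $\Omega\cap(v+W^\perp)$ are smooth strictly convex bodies of dimension $d_V+d_{V'}$, so the stationary-phase bound for their indicators, applied slice-wise and integrated over $v\in W$, yields a Fourier decay $|\widehat{\bone_\Omega}(\xi)|\lesssim|\xi_V|^{-(d_V+1)/2}$ after trivially bounding the $V'$-component; substituting $\xi=T_\beps\gamma^*$ and using $\|T_\beps^{-1}\|_V=\delta_V$ produces a decay factor that is a positive power of $\delta_V$. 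In the $V'$-directions no such decay is available for $\bone_\Omega$, so one uses the trivial bound $|\widehat{\bone_\Omega}|\le\Vol(\Omega)$ together with the Schwartz decay of $\hat\phi(\delta T_\beps\gamma^*)$, which restricts the effective $V'$-frequencies to a box of size $\delta^{-1}\delta_V$, giving $\sim\delta^{-d_{V'}}$ counting factors. The hypothesis $\Gamma^*\cap V'=\{0\}$ is crucial here because $V'$ carries no resonant frequencies that would spoil the decay.

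Summing these estimates gives a Fourier-side error of order $|\beps|^{-1}$ times a product of positive powers of $\delta^{-1}$ and $\delta_V$, and balancing this against the mollification error $|\beps|^{-1}\delta$ by an optimal choice of $\delta$ should produce the exponent $2d_V/(1+d_V+2d_{V'})$ stated in the theorem. The hard part will be producing the slice-wise Fourier decay \emph{uniformly} in the slice: standard stationary-phase gives pointwise decay for each fixed slice, but one needs a constant independent of the slicing parameter $v\in W$, which requires uniform lower bounds on the principal curvatures of $\partial\Omega$ obtainable by a compactness argument from the smoothness and strict convexity hypotheses. A secondary delicate point, when $V'\neq 0$, is counting the error frequencies uniformly in $\beps$: the hypothesis $\Gamma^*\cap V'=\{0\}$ must be leveraged quantitatively through a Minkowski-type argument to control the interaction of $\Gamma^*$ with the irrational subspace $V'$.
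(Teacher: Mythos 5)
Your proposal follows the same route as the paper: mollify, apply Poisson summation on $\Gamma$, control the dual-sum tail by a partial Fourier transform estimate along $V$ combined with the Schwartz decay of the mollifier's transform along $V'$, and finally balance this against the mollification error. Your isotropic mollifier $\phi_\delta$ applied to $\bone_\Omega$ is in fact the same object as the paper's anisotropic $\rho_\bh$ applied to $\bone_{T_\beps\Omega}$: by the paper's Lemma~\ref{lem:linearity}, $T_\beps(B)_{\pm\bh}=T_\beps\big(B_{\pm\beps\bh}\big)$, and the eventual choice $h_k=\delta_V^{\alpha}\eps_k^{-1}$ makes $\eps_k h_k$ independent of $k$, so the effective smoothing in unstretched coordinates is exactly your isotropic ball with $\delta=\delta_V^{\alpha}$. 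So the anisotropy of $\rho_\bh$ is a bookkeeping device, not a new ingredient, and your parametrisation is equivalent.

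The one genuine divergence is your handling of $W\neq\{0\}$. You propose to separate the frequencies $\gamma^*\in\Gamma^*\cap W$ and resum them by a second Poisson summation on the sublattice $\Gamma^*\cap W$ to reconstruct the slice sum. The paper instead invokes the Kordyukov--Yakovlev slicing identity (their Lemma~3.3, quoted in the remark following the theorem), which writes $n_\beps(\Omega;\Gamma;\by)=\sum_{\gamma\in\Gamma_W}n_\beps(\Omega\cap(\gamma+W^\perp);\Gamma(W^\perp);\by_\gamma)$ as a finite sum and thereby reduces the entire statement to the case $W=\{0\}$ before any Fourier analysis is done. Your resummation is the Fourier-dual of that slicing and should succeed, but it places the degenerate directions of $V_0$ inside the Poisson summation, which requires additional justification; the paper's reduction avoids the issue by citing prior work. (For the application in Theorem~\ref{thm:weyllattice} one always has $W=V'=\{0\}$ and $V=\R^d$, so this distinction is moot there.) Your two flagged difficulties are precisely the two non-routine points of the paper's argument: the uniform slice-wise decay is what Lemma~\ref{lem:parftconvex} delivers via the inequality $\abs{\ft{\bone_\Omega}(\bxi)}\le\int_{V^\perp}\abs{\ft[V]{\bone_\Omega}(\bxi_V,\bx')}\de\bx'$ and the compact support in $\bx'$, and the condition $\Gamma^*\cap V'=\{0\}$ is used to guarantee $\gamma_V^*\neq 0$ on every term of the error sum, so the $V$-decay is always available.
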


\begin{rem}
 If $W = \Gamma_W = \set 0$, the condition on $\Omega$ becomes strict convexity, and the asymptotic formula becomes 
\begin{equation*}
  n_{\beps}(\Omega;\Gamma;\by) = \frac{|\Omega|}{|\Gamma|} \abs\beps^{-1} + \bigo{\abs\beps^{-1}\delta_V^{\frac{2 d_V}{1 + d_V + 2 d_{V'}}}}
 \end{equation*}
with the implicit constants dependant on $V'$ and $\Omega$.

Furthermore, from \cite{KY2015}[Section 3.2], this is the only case we need to prove. Indeed, they show that there are $\by_\gamma$ such that
\begin{equation*}
 n_{\beps}(\Omega;\Gamma;\by) = \sum_{\gamma \in \Gamma_W} n_{\beps}(\Omega \cap (\gamma + W^\perp);\Gamma(W^\perp);\by_\gamma).
\end{equation*}
They then show in \cite{KY2015}[Lemma 3.3] that $\Gamma(W^\perp)^* \cap (V' \cap W^\perp) = \set 0$. Since the sum in equation \eqref{eq:mainthm} is finite, we obtain the desired result by applying Theorem \ref{thm:recursive} with $W = \set 0$ term by term.
 \end{rem}

\subsection{From \texorpdfstring{$\CT$}{T} to lattices}

We start by showing that we can restrict ourselves to lattices of the form $T_{\beps}^{-1} \Z^d$ in our investigation of   Theorem \ref{thm:weyllattice}. 

\begin{lem} \label{prop:equiveps}
 For every $\Gamma \in \CL$, there exists a decomposition 
 \[
 \R^d = E = \bigoplus_{j=1}^d V_j
 \]
and $T_{\beps} \in \CT_E$ such that
 \begin{equation} \label{eq:counting}
  N(\Gamma;B_1) = n_{\beps}\left(B_1;\Z^d;0\right). 
 \end{equation}
 For every $T_{\beps} \in \CT$, there exists $\Gamma$, such that equation \eqref{eq:counting} holds.
\end{lem}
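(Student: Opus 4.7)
The core observation is that both sides of~\eqref{eq:counting} are counts of integer points inside a centred ellipsoid, and any centred ellipsoid in $\R^d$ can be written both as $A^{-1}B_1$ for some generator matrix $A$ of a lattice and as $T_{\beps}B_1$ for a suitable $T_{\beps} \in \CT$. The two directions of the lemma correspond to the two ways of exploiting this identification, so the proof will essentially be a change-of-variables argument using the rotational symmetry of the unit ball.

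For the first direction, given $\Gamma \in \CL$, I would pick a generator matrix $A$ so that $\Gamma = A\Z^d$. Using that $A$ is a bijection on $\R^d$,
\[
  N(\Gamma;B_1) = \#(A\Z^d \cap B_1) = \#(\Z^d \cap A^{-1}B_1).
\]
The set $A^{-1}B_1 = \{x \in \R^d : |Ax| \le 1\}$ is a centred ellipsoid whose principal axes are the eigenvectors of $A^T A$ and whose semi-axis lengths are the reciprocals of the singular values of $A$. I would then take the orthogonal decomposition $\R^d = \bigoplus_{j=1}^d V_j$ into the one-dimensional eigenspaces of $A^T A$, splitting any higher-dimensional eigenspace arbitrarily into lines, and define $\beps$ by setting $\eps_j^{-1}$ equal to the semi-axis length of $A^{-1}B_1$ along $V_j$. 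Labelling the $V_j$ so that these semi-axes are non-increasing in $j$ enforces the ordering $\eps_1 \le \dotsb \le \eps_d$. By construction $T_{\beps}B_1 = A^{-1}B_1$ as subsets of $\R^d$, and therefore $n_{\beps}(B_1;\Z^d;0) = N(\Gamma;B_1)$.

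For the second direction, given $T_{\beps} \in \CT$, invertibility of $T_{\beps}$ makes $\Gamma := T_{\beps}^{-1}\Z^d$ a full-rank lattice, and applying the bijection $T_{\beps}$ yields
\[
  N(\Gamma;B_1) = \#(T_{\beps}^{-1}\Z^d \cap B_1) = \#(\Z^d \cap T_{\beps}B_1) = n_{\beps}(B_1;\Z^d;0),
\]
as required. I do not foresee a substantial obstacle here: the whole statement is a bookkeeping device which repackages counts of $A\Z^d$ in $B_1$ as counts of $\Z^d$ in the canonically associated ellipsoid, and the role of the decomposition $E$ is simply to record the principal-axis frame of that ellipsoid. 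The only point requiring care is the interaction of the monotonicity convention $\eps_1 \le \dotsb \le \eps_d$ with possible multiplicities in the spectrum of $A^T A$, which is handled by relabelling the one-dimensional summands of $E$.
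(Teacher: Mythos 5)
Your proof is correct and follows essentially the same route as the paper's: both rewrite $N(\Gamma;B_1)$ as a count of $\Z^d$-points in a centred ellipsoid associated to a generator matrix $A$ of $\Gamma$, diagonalise the defining quadratic form, and use the rotational symmetry of $B_1$ to present that ellipsoid as $T_{\beps}B_1$ with the $V_j$ along the principal axes. The second direction, taking $\Gamma = T_{\beps}^{-1}\Z^d$, is identical to the paper's.
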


\begin{proof}
 Let $A_\Gamma \in \GL_d(\R)$ be such that $A_\Gamma \Z^d = \Gamma$. Then,
 \begin{equation*}
  \begin{aligned}
   \sum_{\gamma \in \Gamma} \bone_{B_1}\left(\gamma\right) &= \sum_{\gamma \in \Gamma} \bone_{A_\Gamma(B_1)}\left(A_\Gamma^{-1} \gamma\right) \\
   &= \sum_{\bn \in \Z^d} \bone_{A_\Gamma( B_1)}\left( \bn \right) 
  \end{aligned}
 \end{equation*}
Observe now that since $B_1 = \set{\bx \in \R^d : \bx^* \bx \le 1 }$, then 
\[
 A_\Gamma(B_1) = \set{\bx \in \R^d : \bx^* (A_\Gamma^*)^{-1} A_\Gamma^{-1} \bx \le 1}.
\]
 Since $(A_\Gamma A_\Gamma^*)^{-1}$ is symmetric definite positive, it can be diagonalised as 
\begin{equation*}
 (A_\Gamma A_\Gamma^*)^{-1} = U^* D^{1/2} D^{1/2} U
\end{equation*}
 with $U$ orthogonal.  Let $\beps = \diag(D^{1/2})$ and $V_j$ be eigenspaces of $(A_\Gamma A_\Gamma^*)^{-1}$. Since $N(\Gamma;B_1)$ is invariant under orthogonal transformations of $\Gamma$, we have that
 \begin{equation*}
  N(\Gamma;1) = N(U\Gamma;1) = \#\set{A_{\Gamma}^{-1} U \Gamma \cap A_\Gamma^{-1} B_1} = n_{\beps}(B_1).
 \end{equation*}
On the other hand, this process can be inverted : given $T_{\beps}$, we take $\Gamma$ to be the lattice with generating matrix $T_{\beps}^{-1}$. 

\end{proof}

The previous lemma allows us to consider only the lattices of the form $\Gamma = T_{\beps}^{-1} \Z^d$. The following lemma relates the lattice invariants to the associated transformation $T_{\beps}$.

\begin{lem} \label{lem:succminbounds}
 Let $\Gamma$ be a lattice in $\CL$. Then, for any $T_{\beps} \in \CT$ such that $\Gamma = T_{\beps}^{-1} U \Z^d$ for some orthogonal transformation $U$ we have that 
 \begin{equation*}
  \abs{\Gamma} = \det\left(T_{\beps}^{-1}\right) = \abs{\beps}.
 \end{equation*}
and that the following bounds hold for the successive minima $\mu_1(\Gamma)$ and $\mu_d(\Gamma)$ :
\begin{equation*}
  \eps_1 \le \mu_1(\Gamma) \le \mu_d(\Gamma) \le \eps_d.
\end{equation*}
Furthermore, one can choose $T_{\beps}$ such that $\Gamma = T_{\beps}^{-1} U \Z^d$ and
\begin{equation*}
  \mu_1(\Gamma) \le \frac{d^{5/2}}{2} \eps_1\qquad \text{and} \qquad \mu_d(\Gamma) \ge  \frac{2}{d^{3/2}}\eps_d.
\end{equation*}
\end{lem}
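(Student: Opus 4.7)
The determinant formula and the general bounds follow from the basic structure of $T_{\beps}^{-1}$. Orthogonality of $U$ gives $|\Gamma|=|\det A_\Gamma|=|\det T_{\beps}^{-1}|=\prod_j\eps_j=|\beps|$. Since $T_{\beps}^{-1}$ is the symmetric positive operator that acts as $\eps_j$ on the eigenspace $V_j$, one has $\eps_1|v|\le|T_{\beps}^{-1}v|\le\eps_d|v|$ for every $v\in\R^d$. Taking $v=U\bn$ with $\bn\in\Z^d\setminus\{0\}$ yields $|T_{\beps}^{-1}U\bn|\ge\eps_1|\bn|\ge\eps_1$, hence $\mu_1(\Gamma)\ge\eps_1$. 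Conversely, the family $\{T_{\beps}^{-1}U\be_j\}_{j=1}^d$ is linearly independent in $\Gamma$ and each of its members has norm at most $\eps_d$, so $\mu_d(\Gamma)\le\eps_d$.

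For the sharper bounds, I would let $A_\Gamma$ be the matrix of a Minkowski reduced basis $\{b_1,\dotsc,b_d\}$ of $\Gamma$; by the classical reduction theorem (see \cite{Cassels}) this basis satisfies $|b_j|\le(d/2)\mu_j(\Gamma)$ for all $j$. Take its polar decomposition $A_\Gamma=T_{\beps}^{-1}U$ with $T_{\beps}^{-1}:=(A_\Gamma A_\Gamma^*)^{1/2}$ symmetric positive definite and $U$ orthogonal; the $\eps_j$ then coincide with the singular values of $A_\Gamma$ arranged in increasing order, and the $V_j$ are the corresponding eigenspaces of $T_{\beps}^{-1}$. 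The bound $\mu_d(\Gamma)\ge(2/d^{3/2})\eps_d$ follows at once from the Frobenius estimate
\[
\eps_d\;\le\;\|A_\Gamma\|_F\;=\;\Bigl(\sum_j|b_j|^2\Bigr)^{1/2}\;\le\;\sqrt d\,\max_j|b_j|\;\le\;\frac{d^{3/2}}{2}\mu_d(\Gamma).
\]

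For $\mu_1(\Gamma)\le(d^{5/2}/2)\eps_1$, I would pass to the dual lattice. The matrix $A_{\Gamma^*}=(A_\Gamma^*)^{-1}$ generates $\Gamma^*$, and its singular values are $\{1/\eps_{d-j+1}\}$, so its largest singular value equals $1/\eps_1$. A suitable strengthening of reduction (for example HKZ reduction, or a direct estimate on the Gram matrix of a Minkowski reduced basis) ensures that the columns $b_j^*$ of $A_{\Gamma^*}$ satisfy $|b_j^*|\le(d/2)\mu_{d-j+1}(\Gamma^*)$, and the same Frobenius argument gives $1/\eps_1\le(d^{3/2}/2)\mu_d(\Gamma^*)$. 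Banaszczyk's transference (Theorem~\ref{thm:banaszczyk}) then yields $\mu_d(\Gamma^*)\le d/\mu_1(\Gamma)$, whence $1/\eps_1\le d^{5/2}/(2\mu_1(\Gamma))$, which is the desired estimate. The main obstacle is the simultaneous control of a basis and its dual: the dual of a Minkowski reduced basis is not in general itself Minkowski reduced, so one must either invoke a stronger notion of reduction or argue directly on the singular values of the Gram matrix of a reduced basis; once this input is in place, the rest of the proof reduces to the linear-algebraic steps above.
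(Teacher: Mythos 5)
Your argument for the determinant formula, the basic inequalities $\eps_1\le\mu_1(\Gamma)\le\mu_d(\Gamma)\le\eps_d$, and the estimate $\mu_d(\Gamma)\ge(2/d^{3/2})\eps_d$ is essentially the paper's own: orthogonal invariance for the determinant, the eigenspace decomposition of $T_{\beps}^{-1}$ for the crude bounds, and the Cassels reduced basis combined with a Frobenius-norm bound on the largest eigenvalue of the Gram matrix (or equivalently the largest singular value of $A_\Gamma$) for the sharper lower bound on $\mu_d$. These parts are fine.

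For the remaining bound $\mu_1(\Gamma)\le(d^{5/2}/2)\eps_1$, you have correctly put your finger on the real difficulty, and you are right to treat it as unresolved: a Minkowski (or Cassels) reduced basis of $\Gamma$ does not have a dual basis that is itself reduced, so the Frobenius argument cannot simply be applied to $\Gamma^*$ with the generating matrix inherited from the $T_{\beps}$ already fixed by the reduction of $\Gamma$. This is a genuine gap in your proposal. It is worth noting, however, that the paper's own proof has the identical gap: it writes ``observe that a generating matrix for $\Gamma^*$ is $T_{\beps}$. Hence, by the previous argument we have that $\mu_d(\Gamma^*)\ge\tfrac{2}{d^{3/2}}\eps_1^{-1}$,'' but ``the previous argument'' requires a basis of $\Gamma^*$ whose vectors have norm at most $\tfrac{d}{2}\mu_d(\Gamma^*)$, and no such control on the dual basis is verified. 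To actually close the gap one would need a simultaneous control on a primal basis and its dual. Two possible routes are: (i) use a dual-reduced basis (e.g.\ Seysen reduction, or take the reduced basis of $\Gamma^*$ instead and check both estimates for the resulting $T_{\beps}$), or (ii) bound $|b_j^*|$ directly for a Minkowski reduced basis $\{b_j\}$ via $|b_j^*|\le\prod_{i\ne j}|b_i|/|\det B|$ together with Minkowski's second theorem; the latter gives a bound of the shape $\mu_1\le C(d)\eps_1$ but with a constant growing considerably faster than $d^{5/2}$, so one would have to check whether the stated constant survives. In any case your identification of the obstacle is accurate and should be viewed as a correction to, not a deviation from, the paper's argument.
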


\begin{proof}
 Without loss of generality, since the determinant and successive minima are invariant under orthogonal transformations we suppose that $U = I$. The assertion on determinants holds by definition. Let $\bn$ be any non-zero element of $\Z^d$, and write $\bn = \bn_1 + \dotso + \bn_d$ with $\bn_j \in V_j$. Then,
 \begin{equation*}
 \begin{aligned}
  \abs{T_{\beps}^{-1}\bn}^2 &= \sum_{j=1}^d \eps_j^{2} \abs{\bn_j}^{2} \\
  &\ge \eps_1^2 \sum_{j=1}^d \abs{\bn_j}^2 \\
  &= \eps_1^2 \abs{\bn}^2.
  \end{aligned}
 \end{equation*}
 Since $\bn \ne 0$, we have that $\mu_1(\Gamma) \ge \eps_1$.
For the upper bound on $\mu_d$, observe that any $T_{\beps}$ sends bases of $\R^d$ to bases of $\R^d$. As such, from the definition of $\mu_d$ we have that
\begin{equation*}
\begin{aligned}
 \mu_d(\Gamma) &\le \sup_{\bn \in \Z^d \setminus \set 0} \frac{\abs{T_{\beps}^{-1}\bn}}{\abs{\bn}}, \\
 &\le \eps_d.
\end{aligned}
 \end{equation*}
 
 We now obtain the lower bound on $\mu_d$ for a specific $T_{\beps}$. There is a basis of $\Gamma$ whose elements all have norm smaller than $\frac{d \mu_d(\Gamma)}{2}$ \cite[Lemma V.8]{Cassels}. Let $T_{\beps,\Gamma}^{-1}$ be the square root of the diagonalised Gram matrix $G_{\Gamma}$ associated to that basis. By Cauchy-Schwartz, the entries of the Gram matrices $G_\Gamma$ all bounded by $\frac{d^2 \mu_d(\Gamma)^2}{4}$. Let $\nu_d(G_\Gamma)$ be the largest eigenvalue of $G_{\Gamma}$. It satisfies the bound
  \[
  \nu_d\left(G_\Gamma\right) \le \sqrt{\operatorname{tr}(G^*_\Gamma G_\Gamma)} \le \frac{d^3 \mu_d(\Gamma)^2}{4}. 
 \]
 Note that the eigenvalues of $G_\Gamma$ are the same as those of $T_{\beps,\Gamma}^{-2}$, hence we have that
 \begin{equation*}
  \eps_d \le \frac{d^{3/2}}{2} \mu_d(\Gamma),
 \end{equation*}
yielding the desired result. For the upper bound on $\mu_1$, observe that a generating matrix for $\Gamma^*$ is $T_{\beps}$. Hence, by the previous argument we have that
\begin{equation*}
 \mu_d(\Gamma^*) \ge \frac{2}{d^{3/2}} \eps_1^{-1}.
\end{equation*}
From Banaszczyk's transference theorem, we can then infer that
\begin{equation*}
 \mu_1(\Gamma) \le d \mu_d(\Gamma^*)^{-1} \le \frac{d^{5/2}}{2} \eps_1,
\end{equation*}
finishing the proof.
 
\end{proof}

\subsection{Proof of Theorem \ref{thm:weyllattice}}

Given lattice $\Gamma$ with $\abs \Gamma < 1$, we know from Lemma \ref{prop:equiveps}, one can find a decomposition $E$ of $\R^d$ and a transformation $T_{\beps} \in \CT_E$ such that 
\begin{equation*}
 N(\Gamma;B_1) = n_{\beps}(B_1;\Z^d;0).
\end{equation*}
Furthermore, from Lemma \ref{lem:succminbounds} we get that one can choose $T_{\beps}$ in such a way that
\begin{equation*}
 \eps_d \le \frac{d^{3/2}}{2} \mu_d(\Gamma).
\end{equation*}
We therefore satisfy the hypotheses of Proposition \ref{thm:recursive} with $V = E$, $V' = W = \set{0}$, and $\delta_E = \eps_d$ and we deduce that
\begin{equation*}
 N(\Gamma;B_1) = \omega_d \abs\beps^{-1} + \bigo{\abs \beps^{-1}\eps_d^{\frac{2d}{1 +d}}}.
\end{equation*}
Plugging in $\abs{\Gamma} = \abs \eps$ and $\mu_d \asymp \eps_d$ gives the desired asymptotics. 

\qed

\section{Asymptotic estimates} \label{sec:poisson}

In this section, we prove Theorem \ref{thm:recursive} using the Poisson summation formula.  We follow the structure set out by the author and Parnovski in \cite{lagaceparnovski}. The first thing we have to do is a mollification of $\bone_{\Omega}$ so that it is smooth enough for the Poisson summation formula to be used, and we will get estimates from above and below using the mollified functions. In the second part, we obtain estimates on partial Fourier transforms of such functions. Finally, we use the Poisson summation formula to obtain asymptotics for the counting function.

\subsection{Mollification}

Let $\rho \in C_c^\infty(\R^d)$ be a non-negative bump function supported in the unit ball and such that
\begin{equation*}
 \int_{\R^d} \rho(\bx) \de \bx = 1.
\end{equation*}
We also let $\bh = (h_1,\dotsc,h_d)$ be a set of parameters to be fixed later, and we set
\begin{equation*} \label{eq:mollifier}
 \rho_{\bh}(\bx) = \frac{1}{h_1\cdots h_d}\rho(T_\bh(\bx)).
\end{equation*}
Note that $\rho_\bh$ is supported in the ellipsoid
\begin{equation*}
 E_\bh = \set{\bx \in V : \left\|T_\bh \bx \right\|< 1}.
\end{equation*}
For any function $f : \R^d \to \R$ let $f^{(\bh)}$ be the mollification of $f$ by $\rho_\bh$, that is
\begin{equation*}
 f^{(\bh)}(\bx) = \left[f * \rho_{\bh}\right](\bx) = \int_{\R^d} f(\bx - \by) \rho_\bh(\by) \de \by.
\end{equation*}

Let us now approximate $\bone_\Omega$ by smooth functions. For any set $B$ define the sets
\begin{equation*}
 B_\bh = \bigcup_{\bx \in B} \left(\bx + E_\bh\right) \qquad \text{and} \qquad B_{-\bh} = \R^d \setminus \left(\R^d \setminus B\right)_\bh.
\end{equation*}
The following lemma will be needed about these sets.
\begin{lem} \label{lem:linearity}
 Let $\beps \bh = (\eps_1 h_1,\dotsc,\eps_d h_d)$ and $B \subset V$. Then,
 \begin{equation*}
  T_{\beps}(B)_{\pm\bh} = T_{\beps}(B_{\pm\beps \bh}).
 \end{equation*}

\end{lem}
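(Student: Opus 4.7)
The plan is to reduce the statement to a single computation about how the ellipsoids $E_\bh$ transform under $T_\beps$, and then to extend it from the ``$+$'' case to the ``$-$'' case by exploiting the invertibility of $T_\beps$.

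First I would verify the key identity $T_\beps(E_{\beps\bh}) = E_\bh$. Writing a vector $\bx$ in the decomposition $E = \bigoplus_j V_j$ as $\bx = \sum_j \bx_j$, the operators $T_\beps$ and $T_\bh$ act diagonally as $T_\beps \bx = \sum_j \eps_j^{-1} \bx_j$ and similarly for $T_\bh$, so the composition satisfies $T_{\beps\bh} \circ T_\beps^{-1} = T_\bh$ componentwise. If $\by = T_\beps \bx$, then $T_{\beps\bh}\bx = T_{\beps\bh}T_\beps^{-1}\by = T_\bh \by$, so the condition $\|T_{\beps\bh}\bx\|<1$ defining $E_{\beps\bh}$ transports under $T_\beps$ to the condition $\|T_\bh \by\|<1$ defining $E_\bh$.

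For the ``$+$'' case, I would just unfold definitions and use linearity:
\begin{equation*}
T_\beps(B_{\beps\bh}) = T_\beps\!\left(\bigcup_{\by\in B}(\by + E_{\beps\bh})\right) = \bigcup_{\by\in B}\bigl(T_\beps\by + T_\beps(E_{\beps\bh})\bigr) = \bigcup_{\bz\in T_\beps(B)}(\bz + E_\bh) = T_\beps(B)_\bh,
\end{equation*}
using the identity from the previous step at the third equality.

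For the ``$-$'' case, the point is that since $T_\beps$ is a bijection of $\R^d$, it commutes with complements: $\R^d \setminus T_\beps(B) = T_\beps(\R^d \setminus B)$. Applying the ``$+$'' case to the set $\R^d \setminus B$ gives $(\R^d \setminus T_\beps(B))_\bh = T_\beps((\R^d \setminus B)_{\beps\bh})$, and taking complements on both sides yields the claim. The only obstacle is bookkeeping with the subscripts $\bh$ vs.\ $\beps\bh$; everything else is routine.
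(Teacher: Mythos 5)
Your proof is correct and takes the same route as the paper's one-line proof, which simply cites linearity of $T_\beps$ and the transformation rule for the ellipsoids. Worth noting: the identity you establish, $T_\beps(E_{\beps\bh}) = E_\bh$, is the right one — the paper's proof states it as ``$T_\beps E_\bh = E_{\beps\bh}$'', which has the subscripts the wrong way around (the actual image $T_\beps E_\bh$ is the ellipsoid with semi-axes $h_j/\eps_j$, not $\eps_j h_j$), so your explicit verification via $T_{\beps\bh}\circ T_\beps^{-1} = T_\bh$ is a useful correction; the handling of the ``$-$'' case by passing to complements is also exactly what the definitions require.
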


\begin{proof}
 It follows simply from linearity of $T_{\beps}$ and the fact that $T_{\beps} E_\bh = E_{\beps \bh}$.
\end{proof}

We now prove that $\bone^{(h)}$ provides a good approximation to $\bone$.
\begin{lem}
 Let $\Omega \subset \R^d$ and $\bx \in \R^d$. Then,
 \begin{equation}\label{eq:ineq}
 \bone^{(\bh)}_{T_{\beps}(\Omega)_{- \bh}}\left(\bx\right) \le \bone_{T_{\beps}(\Omega)}\left(\bx \right) \le \bone^{(\bh)}_{T_{\beps}(\Omega)_{ \bh}}\left(\bx \right).
 \end{equation}
\end{lem}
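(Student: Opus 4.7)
The proof will be a direct unpacking of the definitions, with the two inequalities handled by essentially parallel arguments. The key observation is that the mollifier $\rho_{\bh}$ is nonnegative, has unit mass, and is supported in the ellipsoid $E_{\bh}$, which is symmetric under $\by \mapsto -\by$ because it is defined by $\|T_{\bh} \by\| < 1$. Set $\Omega' = T_{\beps}(\Omega)$ for brevity and recall that the convolution can be written
\[
\bone^{(\bh)}_A(\bx) = \int_{\R^d} \bone_A(\bx - \by)\, \rho_{\bh}(\by) \de \by,
\]
so both sides of \eqref{eq:ineq} are $\le 1$ and $\ge 0$ pointwise.

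For the upper bound, I will argue that if $\bx \in \Omega'$ then the integrand of $\bone^{(\bh)}_{\Omega'_{\bh}}(\bx)$ equals $1$ for every $\by \in \supp \rho_{\bh} \subset E_{\bh}$. Indeed, by symmetry of $E_{\bh}$ we have $-\by \in E_{\bh}$, hence $\bx - \by = \bx + (-\by) \in \bx + E_{\bh} \subset \Omega'_{\bh}$ by the very definition of the outer neighbourhood. Thus $\bone^{(\bh)}_{\Omega'_{\bh}}(\bx) = \int \rho_{\bh} = 1 = \bone_{\Omega'}(\bx)$. If instead $\bx \notin \Omega'$, the left side vanishes and the right side is nonnegative, so the inequality is trivial.

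For the lower bound, the situation mirrors the above. I will show that if $\bx \notin \Omega'$, the integrand of $\bone^{(\bh)}_{\Omega'_{-\bh}}(\bx)$ vanishes on $\supp \rho_{\bh}$. Using $\Omega'_{-\bh} = \R^d \setminus (\R^d \setminus \Omega')_{\bh}$ and the symmetry $-\by \in E_{\bh}$, the point $\bx - \by$ lies in $\bx + E_{\bh} \subset (\R^d \setminus \Omega')_{\bh}$, hence outside $\Omega'_{-\bh}$. So $\bone^{(\bh)}_{\Omega'_{-\bh}}(\bx) = 0 = \bone_{\Omega'}(\bx)$. The case $\bx \in \Omega'$ gives right side $1$ and left side $\le 1$, which is immediate.

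There is no real obstacle here: the lemma is purely a bookkeeping statement about how inner and outer ellipsoidal thickenings interact with convolution. The only point that genuinely requires care is the symmetry $E_{\bh} = -E_{\bh}$, which must be invoked explicitly so that the convolution kernel $\rho_{\bh}(\by)$ and the shift $\bx - \by$ are compatible with the one-sided definitions of $\Omega'_{\pm\bh}$. Once that is noted, both inequalities reduce to the set-theoretic fact that $\bx + E_{\bh} \subset \Omega'_{\bh}$ whenever $\bx \in \Omega'$, and dually $\bx + E_{\bh} \cap \Omega'_{-\bh} = \emptyset$ whenever $\bx \notin \Omega'$.
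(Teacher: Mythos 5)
Your proof is correct and follows essentially the same route as the paper's: show the mollified function equals $1$ on $T_{\beps}(\Omega)$ for the upper bound and equals $0$ off $T_{\beps}(\Omega)$ for the lower bound, using the containment $\bx + E_{\bh} \subset T_{\beps}(\Omega)_{\bh}$ (resp.\ $\subset (\R^d \setminus T_{\beps}(\Omega))_{\bh}$). You are slightly more careful than the paper in that you explicitly invoke the symmetry $E_{\bh} = -E_{\bh}$ to reconcile the shift $\bx - \by$ in the convolution with the one-sided definition of $\Omega'_{\pm\bh}$, a point the paper's proof uses tacitly.
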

\begin{proof}
 For any set $B$ we have that 
 \begin{equation*}
  0 \le \bone_B^{(\bh)} \le 1.
 \end{equation*}
Hence, to show the right most inequality in \eqref{eq:ineq} it suffices to show that for any $\bx \in T_{\beps}(\Omega) $ we have that  $\bone_{T_{\beps}(\Omega)_{\bh}}^{(\bh)}\left(\bx\right) = 1$. By definition $\bx + E_{\bh} \subset T_{\beps}(\Omega)_{ \bh}$ hence
\begin{equation*}
 \begin{aligned}
  \bone_{(T_{\beps}\Omega)_{\bh}}^{(\bh)}\left(\bx\right)&= \int_{E_{ \bh}} \rho_{\bh} (\by) \de \by = 1.
 \end{aligned}
\end{equation*}

To prove the left-most inequality in \eqref{eq:ineq}, it suffices to show that for any $\bx \in E \setminus T_{\beps}(\Omega)$  we have that  $\bone_{T_{\beps}(\Omega)_{-\bh}}^{(\bh)}(\bx) = 0$. We have that
\begin{equation*}
\bx + E_{\bh} \subset (\R^d \setminus T_{\beps}(\Omega))_\bh,
\end{equation*}
and $\bone_{(T_{\beps}(\Omega))_{-\bh}}$  is supported in the complement of that set. Hence,
\begin{align*}
\bone_{T_{\beps}(\Omega)_{\bh})}^{(\bh)}(\bx) &= \int_{E_\bh} \bone_{T_{\beps}(\Omega)_{-\bh}}(\bx-\by) \rho_\bh(\by) \de \by \\ 
&= 0,
\end{align*}
finishing the proof.
\end{proof}

The following corollary follows directly from the previous lemma.

\begin{cor}
Defining 
\begin{equation*}
 n_{\beps}^{\pm}(\Omega) = \sum_{\gamma \in \Gamma} \bone_{T_{\beps}(\Omega)_{\pm\bh}}^{(\bh)}(\gamma),
\end{equation*}
the inequalities
\begin{equation*}
 n_{\beps}^-(\Omega) \le n_{\beps}(\Omega) \le n_{\beps}^+(\Omega)
\end{equation*}
hold for all $\beps$.
\end{cor}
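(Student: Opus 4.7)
The statement is an immediate consequence of the pointwise inequality established in the previous lemma, so there is no real obstacle here: the plan is simply to sum the inequality over lattice points.

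More precisely, I would first recall that by definition (with $\by = 0$ for simplicity)
\[
 n_{\beps}(\Omega) = \#(\Gamma \cap T_{\beps}(\Omega)) = \sum_{\gamma \in \Gamma} \bone_{T_{\beps}(\Omega)}(\gamma).
\]
Then I would apply the previous lemma at each point $\bx = \gamma \in \Gamma$, giving
\[
 \bone^{(\bh)}_{T_{\beps}(\Omega)_{-\bh}}(\gamma) \le \bone_{T_{\beps}(\Omega)}(\gamma) \le \bone^{(\bh)}_{T_{\beps}(\Omega)_{\bh}}(\gamma).
\]
Summing these inequalities over $\gamma \in \Gamma$ yields directly
\[
 \sum_{\gamma \in \Gamma} \bone^{(\bh)}_{T_{\beps}(\Omega)_{-\bh}}(\gamma) \le n_{\beps}(\Omega) \le \sum_{\gamma \in \Gamma} \bone^{(\bh)}_{T_{\beps}(\Omega)_{\bh}}(\gamma),
\]
which is exactly $n_{\beps}^-(\Omega) \le n_{\beps}(\Omega) \le n_{\beps}^+(\Omega)$ by the definition given in the statement.

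The only minor subtlety worth mentioning is that the sums $n_{\beps}^\pm(\Omega)$ are a priori infinite series, but since $\rho_\bh$ is compactly supported and $\Omega$ is bounded, the functions $\bone^{(\bh)}_{T_{\beps}(\Omega)_{\pm\bh}}$ have compact support, so only finitely many terms contribute and the sums are well defined and finite. No nontrivial step is required beyond invoking the lemma termwise.
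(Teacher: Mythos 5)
Your proof is correct and is exactly the intended argument: the paper itself simply states that the corollary "follows directly from the previous lemma," and summing the pointwise inequality over $\gamma \in \Gamma$ is the obvious way to do this. The finiteness remark at the end is a reasonable sanity check but not strictly needed, since the inequality between the (possibly extended-real-valued) sums would hold termwise regardless.
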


\subsection{Fourier transform estimates}

Let $V$ be a subspace of $\R^d$ and write $\bx = \bx_V + \bx'$ for any $\bx \in \R^d$. We define the $V$-Fourier transform of a sufficiently rapidly decaying function $f$ as
\begin{equation*} 
\begin{aligned}
 \ft[V]{f}(\bxi_V,\bx') &= \int_{V} e^{- 2 \pi i \bx_V \cdot \bxi_V} f(\bx_V,\bx') \de \bx_V.
 \end{aligned}
\end{equation*}
When $V = \R^d$, we will write $\ft f := \ft[\R^d]f$. We obtain estimates for the decay of $\ft f(\bx)$ in terms of $\ft[V]{f}$.
Observe that\begin{equation}\label{eq:wfour}\begin{aligned}
 \abs{\ft f(\bxi)} &= \abs{\int_{\R^d} e^{- 2 \pi i \bx \cdot \bxi} f(\bx) \de \bx },\\
 &= \abs{\int_{V^\perp} e^{- 2 \pi i \bx' \cdot \bxi'}\int_{V} e^{- 2 \pi i \bx_V \cdot \bxi_V} f(\bx) \de \bx_V \de \bx' },\\
 &= \abs{\int_{V^\perp} e^{- 2 \pi i \bx' \cdot \bxi'} \ft[V]f(\bxi_V,\bx') \de \bx'}, \\
 &\le \int_{V^\perp} \abs{\ft[V]f(\bxi_V,\bx')} \de \bx'.
\end{aligned}\end{equation}

From this we get the following lemma.

\begin{lem} \label{lem:parftconvex}
 Let $\Omega$ be a bounded domain, and $V$ be a subspace of dimension $d_V$ such that the intersection $\Omega \cap V$ is strictly convex. Then,
 \begin{equation*}
  \ft{\bone_\Omega}(\bxi) = \bigo{\abs{\bxi_V}^{-\frac{d_V+1}{2}}}.
 \end{equation*}
\end{lem}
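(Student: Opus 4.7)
The plan is to build on inequality \eqref{eq:wfour}, already established in the excerpt, which reduces matters to a uniform pointwise bound on the partial Fourier transform $\ft[V]{\bone_\Omega}(\bxi_V, \bx')$ together with an integrability estimate in $\bx'$. For fixed $\bx' \in V^\perp$, this partial Fourier transform is simply the $d_V$-dimensional Fourier transform (on $V$) of the indicator of the slice $\Omega_{\bx'} := (\Omega - \bx') \cap V$.

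Interpreting the hypothesis in the same spirit as its use in Theorem \ref{thm:recursive} (namely, that every nonempty slice $\Omega \cap (\bx' + V)$ is strictly convex with smooth boundary of non-vanishing Gaussian curvature), each slice is a convex body to which the classical Hlawka--Herz decay estimate applies: for a bounded strictly convex $K \subset \R^{d_V}$ with $C^\infty$ boundary of positive Gaussian curvature, $|\ft{\bone_K}(\bxi)| \le C_K |\bxi|^{-(d_V+1)/2}$. The standard proof converts the volume integral into a boundary integral via the divergence theorem, localises near the two antipodal critical points where $\bxi$ is normal to $\partial K$, and applies the method of stationary phase; the implicit constant depends only on an upper bound for the diameter of $K$ and a lower bound on its Gaussian curvature.

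Inserting this pointwise bound into \eqref{eq:wfour} gives
\[
 |\ft{\bone_\Omega}(\bxi)| \le |\bxi_V|^{-(d_V+1)/2} \int_{V^\perp} C(\bx')\, \de \bx',
\]
where $C(\bx')$ is the Hlawka--Herz constant for the slice $\Omega_{\bx'}$ and is taken to be zero when the slice is empty. Since $\Omega$ is bounded, its orthogonal projection onto $V^\perp$ has finite Lebesgue measure, and the claim follows once $C(\bx')$ is shown to be integrable over this bounded region.

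The main obstacle is to control $C(\bx')$ near the relative boundary of the projection of $\Omega$ onto $V^\perp$, where the slice $\Omega_{\bx'}$ degenerates to lower dimension and its principal curvatures blow up, so that the naive stationary-phase constant is not uniformly bounded. The customary remedy is to use the trivial bound $|\ft[V]{\bone_\Omega}(\bxi_V, \bx')| \le \Vol_{d_V}(\Omega_{\bx'})$ on a dyadic shell near the degeneration locus (on which the slice volume is small) together with the stationary-phase bound on the complement, balancing the two to still recover the claimed $|\bxi_V|^{-(d_V+1)/2}$ decay after integration in $\bx'$.
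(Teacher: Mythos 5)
Your proposal follows the same route as the paper's proof: reduce via \eqref{eq:wfour} to a pointwise estimate on the slicewise partial Fourier transform, invoke the classical stationary-phase decay bound for strictly convex bodies, and integrate over the compact support in $\bx'$. Your observation that the $\bx'$-uniformity of the implicit constant needs justification is fair --- the paper's $\bigo[\bx']{\cdot}$ notation explicitly allows the constant to depend on $\bx'$, and the closing sentence appeals only to compact support of the slicewise transform, which by itself does not give integrability of the constant.

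However, the worry is resolved more simply than you propose, and your diagnosis has the sign reversed. The leading coefficient in the stationary-phase expansion for a convex body scales like $\kappa^{-1/2}$ in the Gaussian curvature $\kappa$, so when a slice $\Omega_{\bx'}$ degenerates and its principal curvatures blow up, the constant $C(\bx')$ tends to \emph{zero}, not infinity; for the ball $B_1\subset\R^3$ sliced at height $z$, the Bessel asymptotics give $C(z)\asymp(1-z^2)^{1/4}$. More decisively, for compact $\Omega$ with smooth strictly convex boundary the slice curvature is bounded \emph{below} uniformly in $\bx'$: $\Omega$ is contained in a ball of some fixed radius $R$ tangent to $\partial\Omega$ at any prescribed boundary point, and the slice of that ball by $\bx'+V$ is a $d_V$-dimensional ball of radius at most $R$ enclosing the slice of $\Omega$ and touching its boundary there, so each slice has curvature at least $1/R$. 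Combined with the uniform diameter bound, $C(\bx')$ is uniformly bounded and the integral over the bounded projection converges without further decomposition. Your dyadic-shell fallback is therefore unnecessary, and as sketched it would not close the argument anyway: the trivial bound $\abs{\ft[V]{\bone_\Omega}(\bxi_V,\bx')}\le\Vol_{d_V}(\Omega_{\bx'})$ carries no decay in $\abs{\bxi_V}$, so applying it on a shell chosen independently of $\abs{\bxi_V}$ destroys the claimed exponent; the cut-off must be placed at slice-diameter of order $\abs{\bxi_V}^{-1}$ and the resulting trade-off worked out explicitly, which you leave as an outline.
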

\begin{proof}
 Standard results about the Fourier transform of the indicator of a strictly convex set (see e.g. \cite[Theorem 2.29]{IL}) tell us that
\begin{equation*}
 \ft[V]{\bone_\Omega}(\bxi_V,\bx') = \bigo[\bx']{\abs{\bxi_V}^{-\frac{d_V+1}{2}}}.
\end{equation*}
From equation \eqref{eq:wfour}, we have that
\begin{equation*}
\abs{\ft \bone_\Omega(\bxi)} \le  \int_{V^\perp} \abs{\ft[V]\bone_\Omega(\bxi_V,\bx')} \de \bx'.
\end{equation*}
Since $\ft[V]{\bone_\Omega}(\bxi_V,\bx')$ is compactly supported in $\bx'$, we obtain the desired result, finishing the proof.
\end{proof}

\subsection{Poisson summation formula}

Let us apply the Poisson summation formula to the smoothed sums $n_{\beps}^\pm(\Omega;\Gamma;\by)$. Denote $\Gamma' = \Gamma^* \setminus \set 0$ to obtain
\begin{equation} \label{eq:poissonbone}
\begin{aligned}
  n_{\beps}^\pm(\Omega,\by;0) &= \sum_{\gamma \in \Gamma} \bone_{T_{\beps}(\Omega)_{\pm \bh} + \by}^{(\bh)}(\gamma) = \frac{1}{|\Gamma|}\sum_{\gamma^* \in \Gamma^*} \ft{\bone_{T_{\beps}(\Omega)_{\pm \bh} + \by}^{(\bh)}}(\gamma^*); \\
  &= \frac{1}{\abs \Gamma} \ft{\bone_{T_{\beps}(\Omega)_{\pm \bh} + \by}^{(\bh)}}(0) + \Sigma(\beps,\bh,\by)
  \end{aligned}
\end{equation}
Observe that
\begin{equation} \label{eq:ftsplit}
  \ft{\bone_{T_{\beps}(\Omega)_{\pm \bh} + \by}^{(\bh)}}(\bxi) =  e^{i \by \cdot \bxi} \ft{\bone_{T_{\beps}(\Omega)_{\pm \bh} + \by}^{(\bh)}}(\bxi) \ft{\rho_\bh}(\bxi - \by).
\end{equation}
Since we will only find bounds using the absolute values of the terms in the previous equation, and since $$\ft{\bone_{T_{\beps}(\Omega)_{\pm \bh} + \by}^{(\bh)}}(0) = \ft{\bone_{T_{\beps}(\Omega)_{\pm \bh}}^{(\bh)}}(0)$$ we suppose without loss of generality that $\by = 0$.

We first turn our attention to $\ft{\bone_{T_{\beps}(\Omega)_{\pm \bh}}^{(\bh)}}(0)$. Using properties of the Fourier transform, and Lemma \ref{lem:linearity} we have that
\begin{equation*}
 \ft{\bone_{T_{\beps}(\Omega)_{\pm \bh}}}(\xi) = \abs \beps^{-1} \ft{\bone_{\Omega_{\beps \bh}}}(T_{\beps}(\bxi))
 \end{equation*}
 and from equation \eqref{eq:mollifier} that
 \begin{equation*} \label{eq:ftrho}
  \ft{\rho_{\bh}}(\bxi) = \ft{\rho} \left(T_{\bh}^{-1}(\bxi)\right).
\end{equation*}
Hence, the first term in equation \eqref{eq:poissonbone} is given by
\begin{equation*}
\begin{aligned}
 \ft{\bone_{T_{\beps}(\Omega)_{\pm \bh}}^{(\bh)}}(0)&= \frac{\abs \beps^{-1}}{\abs \Gamma}  \ft{\bone_{\Omega_{\beps \bh}}}(T_{\beps}(0)) \ft{\rho} \left(T_{\bh}^{-1}(0)\right) \\
  &=  \frac{\abs \beps^{-1}}{|\Gamma|} \Vol(\Omega_{\pm\beps \bh})
 \end{aligned}
\end{equation*}
As long as all the $\eps_j h_j$ remain bounded, we have that there exists a constant $C$ such that
\begin{equation*}
 \Vol( \Omega_{ \beps \bh} \setminus \Omega) \le C \left(\sum_{j=1}^d \eps_j h_j\right),
\end{equation*}
and
\begin{equation*}
 \Vol( \Omega \setminus \Omega_{- \beps \bh}) \le C \left(\sum_{j=1}^d \eps_j h_j\right),
\end{equation*}
hence, writing $\Omega_{\beps \bh} = \Omega \cup (\Omega_{\beps \bh} \setminus \Omega)$ and $\Omega = \Omega_{-\beps\bh} \cup (\Omega \setminus \Omega_{-\beps\bh})$ we have
\begin{equation*}
\begin{aligned}
\ft{\bone_{T_{\beps}(\Omega)_{\pm \bh}}^{(\bh)}}(0) &= \frac{\abs \beps^{-1}}{|\Gamma|} \Vol(\Omega) +  \bigo{\abs \beps^{-1}\sum_{k=1}^d \eps_k h_k}
\end{aligned}
\end{equation*}
Let us now study $\Sigma(\beps,\bh,0)$ in equation \eqref{eq:poissonbone}. Using equations \eqref{eq:ftsplit} and \eqref{eq:ftrho}  we deduce that
\begin{equation*}
\Sigma(\beps,\bh,0) = \abs\beps^{-1}\sum_{\gamma^* \in \Gamma'} \ft{\bone_{\Omega_{\beps \bh}}}\left(T_{\beps}(\gamma^*)\right) \ft{\rho}\left(T_{\bh}^{-1}(\gamma^*)\right).
\end{equation*}
We have that $\ft{\rho_{\bh}}$ is a Schwartz function, \emph{i.e.} for any $N$
\begin{equation*}
 \ft{\rho_\bh}(\bxi) = \bigo{(1 +|\bxi|)^{- N}},
\end{equation*}
hence we have that
\begin{equation*}
 \Sigma(\beps,\bh,0) = \abs\beps^{-1}\sum_{\gamma^* \in \Gamma'}\frac{\ft{\bone_{\Omega_{\beps \bh}}}\left(T_{\beps}(\gamma^*)\right)}{\left(1 + (h_1 \bx_1)^N + \dotso + (h_d \bx_d)^N\right)}.
\end{equation*}

\subsection{Proof of Propositon \ref{thm:recursive}} All that remains is to bound $\Sigma(\beps,\bh,0)$ and to balance it with the error term coming from the Fourier transform evaluated at $0$. Let $j_V = \operatorname{argmax}(j: V_j \subset V)$ and set $h_V = h_{j_V}$.
Choose
\begin{equation*}
 h_k = \delta_V^{\frac{2d_V}{1 +d_V + 2d_{V'}}} \eps_k^{-1},
\end{equation*}
hence
\begin{equation*}
\abs \bh = \delta_V^{\frac{2d_V (d_V + d_{V'})}{1 +d_V + 2d_{V'}}} \abs \beps^{-1}. 
\end{equation*}
 For all $\gamma^* \in \Gamma'$ we have that $\gamma^*_V \ne 0$. From Lemma \ref{lem:parftconvex}, we obtain the bound

\begin{equation} \label{eq:asympheps}
\begin{aligned}
\Sigma(\beps,\bh,0) &\ll \abs \beps^{-1} \sum_{\gamma^* \in \Gamma'} \frac{\delta_V^{\frac{d_V + 1}{2}}}{|\gamma^*_V|^{\frac{m + 1}{2}}\left(1 + (h_1 \gamma_1^*)^N + \dotso + (h_d \gamma_d^*)^N\right)} \\
 & \ll \delta_V^{\frac{d_V+1}{2}} \abs \beps^{-1} \int_{\R^d} \frac{|\bx_V|^{- \frac{d_V + 1}{2}}}{\left(1 + (h_1 \bx_1)^N + \dotso + (h_d \bx_d)^N\right)} \de \bx  \\
 & \ll (\delta_V h_V)^{-\frac{d_V+1}{2}}\abs \beps^{-1} \abs \bh^{-1}.
\end{aligned}
\end{equation}
Combining with the estimate on $\ft{\bone_{T_{\beps}(\Omega)_{\pm \bh}}^{(\bh)}}(0)$, we have that
\begin{equation} \label{eq:twosums}
\begin{aligned}
 n_{\beps}^\pm(\Omega) = \frac{\abs\beps^{-1}}{|\Gamma|} \Vol(\Omega) &+ \bigo{\abs \beps^{-1}\sum_{k = 1}^d \eps_k h_k} \\ &\qquad+  \bigo{(\delta_V h_V)^{-\frac{d_V+1}{2}} \abs \beps^{-1} \abs \bh^{-1}}.
 \end{aligned}
\end{equation}
Using the fact that $\abs \beps \le \delta_V^{d_V}$, we obtain that equation \eqref{eq:twosums} reduces to
\begin{equation*}
 n_{\beps}^\pm(\Omega) = \frac{\abs\beps^{-1}}{|\Gamma|} \Vol(\Omega) + \bigo{\abs \beps^{-1} \delta_V^{\frac{2d_V}{1 + d_V + 2 d_{V'}}}}.
\end{equation*}

\qed

\bibliographystyle{plain}
\bibliography{../../../biblio/biblio}

\def\cprime{$'$} \def\cprime{$'$} \def\cprime{$'$} \def\cprime{$'$}
\begin{thebibliography}{10}

\bibitem{antunesfreitas}
P.~R.~S. Antunes and P.~Freitas.
\newblock Optimal spectral rectangles and lattice ellipses.
\newblock {\em Proc. R. Soc. Lond. Ser. A Math. Phys. Eng. Sci.}, 469:15 pp.,
  2013.

\bibitem{antunesfreitasnumerical}
Pedro R.~S. Antunes and Pedro Freitas.
\newblock Numerical optimization of low eigenvalues of the {D}irichlet and
  {N}eumann {L}aplacians.
\newblock {\em J. Optim. Theory Appl.}, 154(1):235--257, 2012.

\bibitem{banaszczyk}
W.~Banaszczyk.
\newblock New bounds in some transference theorems in the geometry of numbers.
\newblock {\em Math. Ann.}, 296(4):625--635, 1993.

\bibitem{vBBG}
M.~van~den Berg, D.~Bucur, and K.~Gittins.
\newblock Maximising {N}eumann eigenvalues on rectangles.
\newblock {\em Bull. Lond. Math. Soc.}, 48(5):877--894, 2016.

\bibitem{vBG}
M.~van~den Berg and K.~Gittins.
\newblock Minimizing {D}irichlet eigenvalues on cuboids of unit measure.
\newblock {\em Mathematika}, 63(2):469--482, 2017.

\bibitem{aberger}
A.~Berger.
\newblock The eigenvalues of the {L}aplacian with {D}irichlet boundary
  condition in {$\mathbb{R}^2$} are almost never minimized by disks.
\newblock {\em Ann. Global Anal. Geom.}, 47(3):285--304, 2015.

\bibitem{bergergauduchonmazet}
M.~Berger, P.~Gauduchon, and E.~Mazet.
\newblock {\em Le spectre d'une vari\'et\'e riemannienne}.
\newblock Lecture Notes in Mathematics, Vol. 194. Springer-Verlag, Berlin-New
  York, 1971.

\bibitem{buserisoperimetric}
P.~Buser.
\newblock A note on the isoperimetric constant.
\newblock {\em Ann. Sci. \'Ecole Norm. Sup. (4)}, 15(2):213--230, 1982.

\bibitem{Cassels}
J.~W.~S. Cassels.
\newblock {\em An introduction to the geometry of numbers}.
\newblock Springer-Verlag, Berlin-New York, 1971.
\newblock Second printing, corrected, Die Grundlehren der mathematischen
  Wissenschaften, Band 99.

\bibitem{cd}
B.~Colbois and J.~Dodziuk.
\newblock Riemannian metrics with large {$\lambda_1$}.
\newblock {\em Proc. Amer. Math. Soc.}, 122(3):905--906, 1994.

\bibitem{DuistermaatGuillemin}
J.~J. Duistermaat and V.~W. Guillemin.
\newblock The spectrum of positive elliptic operators and periodic
  bicharacteristics.
\newblock {\em Invent. Math.}, 29(1):39--79, 1975.

\bibitem{faber}
G.~Faber.
\newblock Beweis, dass unter allen homogenen {M}embranen von gleicher {F}läche
  und gleicher {S}pannung die kreisförmige den tiefsten {G}rundton gibt.
\newblock {\em Sitzungsber. Bayer. Akad. Wiss. München, Math.-Phys. Kl.},
  pages 169--172, 1923.

\bibitem{GL}
K.~Gittins and S.~Larson.
\newblock Asymptotic behaviour of cuboids optimising {L}aplacian eigenvalues.
\newblock {\em Integral Equations Operator Theory}, 89(4):607--629, 2017.

\bibitem{HKP}
Asma Hassannezhad, Gerasim Kokarev, and Iosif Polterovich.
\newblock Eigenvalue inequalities on {R}iemannian manifolds with a lower
  {R}icci curvature bound.
\newblock {\em J. Spectr. Theory}, 6(4):807--835, 2016.

\bibitem{hersch}
J.~Hersch.
\newblock Quatre propri\'et\'es isop\'erim\'etriques de membranes sph\'eriques
  homog\`enes.
\newblock {\em C. R. Acad. Sci. Paris S\'er. A-B}, 270:A1645--A1648, 1970.

\bibitem{IL}
A.~Iosevich and E.~Liflyand.
\newblock {\em Decay of the {F}ourier transform, analytic and geometric
  aspects}.
\newblock Birkh\"auser/Springer, Basel, 2014.

\bibitem{KLO}
C.-Y. Kao, R.~Lai, and B.~Osting.
\newblock Maximization of {L}aplace-{B}eltrami eigenvalues on closed
  {R}iemannian surfaces.
\newblock {\em ESAIM Control Optim. Calc. Var.}, 23(2):685--720, 2017.

\bibitem{KNPP}
M.~Karpukhin, N.~Nadirashvili, A.V. Penskoi, and I.~Polterovich.
\newblock An isoperimetric inequality for {L}aplace eigenvalues on the sphere.
\newblock arXiv:1706.05713.

\bibitem{KY2011}
Yu.~A. Kordyukov and A.~A. Yakovlev.
\newblock Lattice points in domains and adiabatic limits.
\newblock {\em Algebra i Analiz}, 23(6):80--95, 2011.

\bibitem{KY2012}
Yu.~A. Kordyukov and A.~A. Yakovlev.
\newblock The problem of the number of integer points in families of
  anisotropically expanding domains, with applications to spectral theory.
\newblock {\em Math. Notes}, 92(3-4):574--576, 2012.
\newblock Translation of Mat. Zametki {92(4)}:632--635 (2012).

\bibitem{KY20152}
Yu.~A. Kordyukov and A.~A. Yakovlev.
\newblock The number of integer points in a family of anisotropically expanding
  domains.
\newblock {\em Monatsh. Math.}, 178(1):97--111, 2015.

\bibitem{KY2015}
Yu.~A. Kordyukov and A.~A. Yakovlev.
\newblock On a problem in geometry of numbers arising in spectral theory.
\newblock {\em Russ. J. Math. Phys.}, 22(4):473--482, 2015.

\bibitem{krahn1}
E.~Krahn.
\newblock Über eine von {R}ayleigh formulierte {M}inimaleigenschaft des
  {K}reises.
\newblock {\em Math. Ann.}, 94:97--100, 1925.

\bibitem{krahn2}
E.~Krahn.
\newblock Über {M}inimaleigenschaften der {K}ugel in drei und mehr
  {D}imensionen.
\newblock {\em Acta Comm. Univ. Tartu (Dorpat)}, A9:1--44, 1926.

\bibitem{lagaceparnovski}
J.~{Lagacé} and L.~{Parnovski}.
\newblock A generalised {G}auss circle problem and integrated density of
  states.
\newblock {\em J. Spectr. Theory}, 6(4):859--879, 2016.

\bibitem{nad}
N.~Nadirashvili.
\newblock Berger's isoperimetric problem and minimal immersions of surfaces.
\newblock {\em Geom. Funct. Anal.}, 6(5):877--897, 1996.

\bibitem{szego}
G.~Szeg{\"o}.
\newblock Inequalities for certain eigenvalues of a membrane of given area.
\newblock {\em J. Rational Mech. Anal.}, 3:343--356, 1954.

\bibitem{weinberger}
H.~F. Weinberger.
\newblock An isoperimetric inequality for the {$N$}-dimensional free membrane
  problem.
\newblock {\em J. Rational Mech. Anal.}, 5:633--636, 1956.

\end{thebibliography}

\end{document}